\newtheorem{proposition}{Proposition}
\newtheorem{thm}{Theorem}[section]
\newtheorem{cor}[thm]{Corollary}
\newtheorem{lem}[thm]{Lemma}
\newtheorem{prop}[thm]{Proposition}
\theoremstyle{remark}
\newtheorem{rem}{Remark}[section]
\def\sph{\mathbb{S}^{d-1}}
\def\f{\frac}
 \def\a{{\alpha}} 
 \def\b{{\beta}}
 \def\k{{\kappa}}
 \def\t{{\theta}}
 \def\l{{\lambda}}
 \def\s{{\sigma}}
 \def\la{{\langle}}
 \def\ra{{\rangle}}
 \def\mb{{\mathbf m}}
 \def\CH{{\mathcal H}}
 \def\CP{{\mathcal P}}
 \def\CV{{\mathcal V}}
 \def\BB{{\mathbb B}}
 \def\NN{{\mathbb N}}
 \def\RR{{\mathbb R}}
  \def\SS{{\mathbb S}}
        \def\proj{\operatorname{proj}}
\newcommand{\wh}{\widehat}
\begin{document}
 
\title[Best Polynomial Approximation on the Unit Ball]
{Best Polynomial Approximation on the Unit Ball}

\author[M. A. Pi\~{n}ar]{Miguel A. Pi\~{n}ar}
\address[M. A. Pi\~{n}ar]{{Departamento de Matem\'atica Aplicada\\
Universidad de Granada\\
18071 Granada, Spain}}\email{mpinar@ugr.es}

\author[Yuan Xu]{Yuan Xu}
\address[Yuan Xu]{Department of Mathematics\\ University of Oregon\\
    Eugene, Oregon 97403-1222.}\email{yuan@math.uoregon.edu}

\thanks{The first author thanks MINECO of Spain and the European Regional Development Fund (ERDF) 
for their support through the grant MTM2014--53171--P and Jun\-ta de Andaluc\'{\i}a research group FQM--384.
The second author was supported in part by NSF Grant DMS-1510296.}

\date{\today}
\keywords{Best approximation, polynomials, orthogonal polynomials, unit ball}
\subjclass[2000]{33C50, 42C10}

\begin{abstract} 
Let $E_n(f)_\mu$ be the error of best approximation by polynomials of degree at 
most $n$ in the space $L^2(\varpi_\mu, \BB^d)$, where $\BB^d$ is the unit ball in $\RR^d$ and
$\varpi_\mu(x) = (1-\|x\|^2)^\mu$ for $\mu > -1$. Our main result shows that, for $s \in \NN$,  
$$
E_n(f)_\mu \le c n^{-2s}[E_{n-2s}(\Delta^s f)_{\mu+2s} + E_{n}(\Delta_0^s f)_{\mu}],
$$
where $\Delta$ and $\Delta_0$ are the Laplace and Laplace-Beltrami operators, respectively. 
We also derive a bound when the right hand side contains odd order derivatives. 
\end{abstract}

\maketitle

\section{Introduction and Main Results}
\setcounter{equation}{0}

The purpose of this paper is to study the best approximation by polynomials of degree at most $n$ on the 
unit ball $\BB^d: = \{x: \|x\|\le 1\}$ in $\RR^d$. For 
$$
\varpi_\mu(x) = (1-\|x\|^2)^\mu, \qquad \mu > -1,\quad x \in \BB^d,
$$ 
we let $\|\cdot\|_\mu$ be the norm of $L^2(\varpi_\mu;\BB^d)$, defined by 
$$
  \|f \|_\mu := \left(b_\mu \int_{\BB^d} |f(x)|^2 \varpi_\mu(x) dx\right)^{1/2}, 
$$
where $b_\mu = 1/\int_{\BB^d} \varpi_\mu(x) dx$. Let $\Pi_n^d$ the space of polynomials of degree at
most $n$ in $d$ variables. We consider the error, $E_n(f)_\mu$, of best approximation by polynomials 
in $\Pi_n^d$ in the space $L^2(\varpi_\mu; \BB^d)$, defined by 
$$
    E_n(f)_\mu: = \inf_{p_n \in \Pi_n^d} \|f - p_n\|_\mu. 
$$
For $d =1$, the study of this quantity is classical and in the core of approximation theory. For $d > 1$, this quantity 
was characterized in \cite{X05} by a modulus of smoothness defined through a weighted translation 
operator and, more recently, in \cite{DX10} by another modulus of smoothness that is more intuitive and easier to 
compute. Approximation by polynomials on the unit ball has been studied and used recently in the spectral 
method for numerical solutions of partial differential equations \cite{ACH1, ACH2, ACH3, LX, STW}, where 
approximation in $L^2$ norm is most relevant.  

However, many problems remain open. For $d =1$,  a useful estimate for differentiable functions is
$E_n(f)_\mu \le c n^{-1} E_{n-1}(f')_{\mu+1}$, which can be easily verified (see, for example, \cite{X16}). 
Our goal in this paper is to extend this estimate to the unit ball $\BB^d$ with $d > 1$. The problem is more subtle 
than it looks in the first sight. For example, the obvious extension $E_n(f)_\mu \le c n^{-1} 
\sum_{i=1}^d E_{n-1}(\partial_i f)_{\mu+1}$, where $\partial_i$ is the $i$th partial derivative, does not appear to hold on the unit
ball, as our proof indicates (see the discussion below and Remark \ref{rem:final}).

For $\mb \in \NN_0^d$, let 
$|\mb| = m_1 + \ldots + m_d$ and $\partial^\mb := \partial_1^{m_1} \cdots \partial_d^{m_d}$. 
For $s \in \NN$, we denote by $W_2^s(\varpi_\mu, \BB^d)$ the Sobolev space 
$$
W_2^s(\varpi_\mu, \BB^d):=\{f \in L^2(\varpi_\mu, \BB^d):\partial^{\mb} f \in L^2(\varpi_{\mu+|\mb|}, \BB^d),\, |\mb| \leq s, \,
 \mb \in \NN_0^d\}.
$$
Let $\Delta$ denote the usual Laplace operator $\Delta = \partial_1^2+ \cdots + \partial_d^2$ and $\Delta_0$ denotes
the Laplace-Beltrami operator on the unit sphere $\sph$ of $\RR^d$. In spherical-polar coordinates $x = r \xi$, $r \ge 0$ 
and $\xi \in \sph$,  
\begin{equation}\label{L-B-operator}
  \Delta = \frac{d^2}{dr^2} + \frac{d-1}{r} \frac{d}{dr} + \frac{1}{r^2} \Delta_0.
\end{equation}
We will also need another family of operators, $D_{i,j}$, defined by 
$$
    D_{i,j} : = x_i \partial_j - x_j \partial_i, \qquad 1 \le i < j \le d.
$$
These are angular derivatives since $D_{i,j} = \partial_{\t_{i,j}}$ in the polar coordinates of the $(x_i,x_j)$ plane,
$(x_i,x_j) = r_{i,j} (\cos \t_{i,j}, \sin \t_{i,j})$. Furthermore, the angular derivatives $D_{i,j}$ and the Laplace-Beltrami 
operator $\Delta_0$ are related by \cite[p. 24]{DaiX} 
\begin{equation}\label{eq:Delta0=}
   \Delta_0 = \sum_{1\le i< j\le d} D_{i,j}^2.  
\end{equation}

Our main results are the following two theorems: 


\begin{thm} \label{thm:1.1}
Let $s \in \NN$ and let $f \in W_2^{2s}(\varpi_\mu, \BB^d)$. Then, for $n \ge 2s$,
\begin{equation} \label{main-Delta}
E_n(f)_{\mu} \leq \frac{c}{n^{2s}}\left[E_{n-2s}(\Delta^s f)_{\mu+2s} + E_n(\Delta_0^s f)_{\mu}\right].
\end{equation} 
\end{thm}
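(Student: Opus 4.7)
The plan is to expand $f$ in its orthogonal polynomial series on the ball and to control each summand individually. Let $V_k(\varpi_\mu)$ denote the space of polynomials of degree exactly $k$ orthogonal (in $\varpi_\mu$) to $\Pi_{k-1}^d$, and let $\proj_k^\mu$ be the orthogonal projection onto it; Parseval gives $E_n(f)_\mu^2 = \sum_{k>n}\|\proj_k^\mu f\|_\mu^2$. A standard mutually orthogonal basis of $V_k(\varpi_\mu)$ is
$$P_{j,\ell,\nu}^{k,\mu}(x) := P_j^{(\mu,\,\ell+d/2-1)}(2\|x\|^2-1)\,Y_{\ell,\nu}(x), \qquad 2j+\ell=k,$$
where $P_j^{(\a,\b)}$ is the Jacobi polynomial and $\{Y_{\ell,\nu}\}_\nu$ is an orthonormal basis (on $\sph$) of spherical harmonics of degree $\ell$ extended homogeneously to $\RR^d$. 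Two structural properties drive the proof: (a) $\Delta_0$ acts diagonally, $\Delta_0 P_{j,\ell,\nu}^{k,\mu}=-\ell(\ell+d-2)\,P_{j,\ell,\nu}^{k,\mu}$, so $\Delta_0$ preserves $V_k(\varpi_\mu)$; and (b) $\Delta$ maps $V_k(\varpi_\mu)$ into $V_{k-2}(\varpi_{\mu+2})$ with $\Delta P_{j,\ell,\nu}^{k,\mu} = \lambda_{j,\ell}^{(\mu)}\,P_{j-1,\ell,\nu}^{k-2,\mu+2}$, where $\lambda_{j,\ell}^{(\mu)}$ is an explicit constant of order $j(j+\mu+\ell+d/2)$, obtained via \eqref{L-B-operator}, the derivative identity $\frac{d}{dt}P_j^{(\a,\b)}=\tfrac{1}{2}(j+\a+\b+1)P_{j-1}^{(\a+1,\b+1)}$, and the Jacobi ODE.

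Iterating (b) gives $\Delta^s P_{j,\ell,\nu}^{k,\mu}=\Lambda_{j,\ell}^{(s,\mu)}P_{j-s,\ell,\nu}^{k-2s,\mu+2s}$ for $j\ge s$ (and zero otherwise), so that $\proj_{k-2s}^{\mu+2s}(\Delta^s f)=\Delta^s\proj_k^\mu f$; combined with the analog for $\Delta_0^s$ from (a), this yields
$$E_n(\Delta_0^s f)_\mu^2 = \sum_{k>n}\|\Delta_0^s\proj_k^\mu f\|_\mu^2, \qquad E_{n-2s}(\Delta^s f)_{\mu+2s}^2 = \sum_{k>n}\|\Delta^s\proj_k^\mu f\|_{\mu+2s}^2.$$
By Parseval, the theorem reduces to the pointwise bound, for every $P\in V_k(\varpi_\mu)$ with $k\ge n$,
$$\|P\|_\mu^2 \le \frac{c}{k^{4s}}\bigl[\|\Delta_0^s P\|_\mu^2 + \|\Delta^s P\|_{\mu+2s}^2\bigr],$$
which, by orthogonality of the basis, further reduces to the scalar inequality
$$h_{j,\ell}^{(\mu)} \le \frac{c}{k^{4s}}\Bigl[(\ell(\ell+d-2))^{2s}\,h_{j,\ell}^{(\mu)} + |\Lambda_{j,\ell}^{(s,\mu)}|^2\,h_{j-s,\ell}^{(\mu+2s)}\Bigr] \qquad (2j+\ell=k),$$
where $h_{j,\ell}^{(\mu)}:=\|P_{j,\ell,\nu}^{k,\mu}\|_\mu^2$ (independent of $\nu$).

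The scalar inequality is verified by a dichotomy on $2j+\ell=k$. If $\ell\ge k/2$, then $(\ell(\ell+d-2))^{2s}\gtrsim k^{4s}$ and the first term alone suffices. If $\ell<k/2$, then $j\ge k/4$ (in particular $j\ge s$ once $n\ge 2s$), and it must be shown that $|\Lambda_{j,\ell}^{(s,\mu)}|^2\,h_{j-s,\ell}^{(\mu+2s)}/h_{j,\ell}^{(\mu)}\gtrsim k^{4s}$. Using the closed form of $\Lambda_{j,\ell}^{(s,\mu)}$ from iterating (b) together with the Jacobi norm formula
$$\|P_j^{(\a,\b)}\|^2 = \frac{2^{\a+\b+1}}{2j+\a+\b+1}\cdot\frac{\Gamma(j+\a+1)\Gamma(j+\b+1)}{j!\,\Gamma(j+\a+\b+1)},$$
the ratio telescopes to a product of Gamma quotients asymptotic to $[j(j+\mu+\ell+d/2)]^{2s}$, which exceeds $c\,k^{4s}$ in this regime.

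The principal obstacle is the explicit computation in (b) and its iterate: one must track the constants produced at each application of $\Delta$, including both the weight-shift $\mu\to\mu+2s$ in the Jacobi parameter and the concomitant change in the normalization $h_{j,\ell}^{(\mu)}$. Once the asymptotic identity $|\Lambda_{j,\ell}^{(s,\mu)}|^2\,h_{j-s,\ell}^{(\mu+2s)}\asymp [j(j+\mu+\ell+d/2)]^{2s}\,h_{j,\ell}^{(\mu)}$ is in hand, the dichotomy together with Parseval summation over $k>n$ delivers \eqref{main-Delta}.
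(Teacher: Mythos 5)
Your proposal is essentially the paper's own proof: expand in the basis $P_j^{(\mu,\ell+\frac{d-2}{2})}(2\|x\|^2-1)Y_{\ell,\nu}$, use that $\Delta_0$ acts diagonally with eigenvalue $-\ell(\ell+d-2)$ while $\Delta$ sends the $(j,\mu)$ component to the $(j-1,\mu+2)$ component, and split according to whether $\ell\ge k/2$ or $j>k/4$ (exactly the paper's $\Sigma_2$ and $\Sigma_1$), comparing $h_{j,\ell}^{(\mu)}$ with $h_{j-s,\ell}^{(\mu+2s)}$ through the explicit norm formula. Two harmless slips: the one-step constant is $4(j+\mu+\ell+\tfrac d2)(j+\ell+\tfrac{d-2}{2})\asymp (j+\ell)^2$, not $\asymp j(j+\ell)$, although your combined asymptotic $|\Lambda_{j,\ell}^{(s,\mu)}|^2 h_{j-s,\ell}^{(\mu+2s)}\asymp [j(j+\ell)]^{2s}h_{j,\ell}^{(\mu)}$ is nevertheless correct because the norm ratio supplies the missing factor $j^{2s}(j+\ell)^{-2s}$; and $\ell<k/2$ only forces $j>k/4>s/2$ rather than $j\ge s$, so for the finitely many $k$ with $2s<k<4s$ one can have $j<s$ (whence $\Delta^s P=0$), but there $\ell\ge 1$ and $k<4s$, so the $\Delta_0$ term still carries the estimate with an $s$-dependent constant.
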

 
Throughout this paper, the constant $c$ is independent of $n$ and $f$, but may depend on $\mu$, $d$ and $s$; its value 
may be different at different occurrences. 
 
It is easy to see that both terms in the right hand side of \eqref{main-Delta} are necessary. Indeed, if $f$ is a harmonic
function, then $\Delta f = 0$ and we need $E_n(\Delta_0^s f)_{\mu}$, whereas if $f$ is a radial function, $f(x) = f_0(\|x\|)$,
then $\Delta_0 f =0$ and we need $E_{n-2r}(\Delta^s f)_{\mu+2s}$. 
 
\begin{thm} \label{thm:1.3}
Let $s \in \NN_0$ and let $f \in W_2^{2s+1}(\varpi_\mu, \BB^d)$. Then, for $n \ge 2s+1$, 
\begin{align}\label{eq:thm1.3}
  E_n(f)_\mu \le \frac{c}{n^{2s+1}} &  \left[ \sum_{i=1}^d E_{n-2s-1}(\partial_i\Delta^s f)_{\mu+2s+1} 
      + \sum_{1\le i < j \le d} E_n(D_{i,j} \Delta_0^s f)_\mu \right].
\end{align}
\end{thm}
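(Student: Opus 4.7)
The plan is to combine Theorem~\ref{thm:1.1} with the $s=0$ case of Theorem~\ref{thm:1.3} itself; the factor $n^{-(2s+1)}$ is $n^{-2s}$ (from Theorem~\ref{thm:1.1}) multiplied by an extra $n^{-1}$ coming from a one-derivative estimate.

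\textbf{Step 1} (Base case $s=0$). I would first establish
\[
E_n(f)_\mu \le \frac{c}{n}\Bigl[\sum_{i=1}^d E_{n-1}(\partial_i f)_{\mu+1} + \sum_{1\le i<j\le d} E_n(D_{i,j}f)_\mu\Bigr]
\]
via the ball-polynomial orthonormal expansion $\{P_{k,\ell,\nu}^\mu\}$ of $f$, indexed by total degree $k$, spherical harmonic degree $\ell$ of the angular factor (with $\ell\le k$ and $k-\ell$ even), and inner index $\nu$; then $E_n(f)_\mu^2 = \sum_{k>n,\ell,\nu}|\hat f_{k,\ell,\nu}|^2$. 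Since $\Delta_0 P_{k,\ell,\nu}^\mu=-\ell(\ell+d-2)P_{k,\ell,\nu}^\mu$, the anti-self-adjointness of $D_{i,j}$ on $L^2(\varpi_\mu,\BB^d)$ combined with \eqref{eq:Delta0=} yields
\[
\sum_{i<j}E_n(D_{i,j}f)_\mu^2 = \sum_{k>n,\ell,\nu}\ell(\ell+d-2)|\hat f_{k,\ell,\nu}|^2.
\]
A parallel identity from the Jacobi raising-lowering relations on the radial factor yields $\sum_i E_{n-1}(\partial_i f)^2_{\mu+1}\asymp \sum_{k>n,\ell,\nu}k(k+2\mu+d-1)|\hat f_{k,\ell,\nu}|^2$. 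The base case then follows from $n^2\lesssim k(k+2\mu+d-1)+\ell(\ell+d-2)$ when $k>n$.

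\textbf{Step 2} (Iteration). Apply Theorem~\ref{thm:1.1} with exponent $s$ to $E_n(f)_\mu$, and then the base case from Step~1 to each of $E_{n-2s}(\Delta^s f)_{\mu+2s}$ and $E_n(\Delta_0^s f)_\mu$. This produces the two desired families of \eqref{eq:thm1.3} together with two families of cross terms, namely $E_{n-2s}(D_{i,j}\Delta^s f)_{\mu+2s}$ (from $\Delta^s f$) and $E_{n-1}(\partial_i \Delta_0^s f)_{\mu+1}$ (from $\Delta_0^s f$).

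The main obstacle is absorbing these cross terms into the RHS of \eqref{eq:thm1.3}. Using $[D_{i,j},\Delta]=0$, the first family rewrites as $E_{n-2s}(\Delta^s D_{i,j}f)_{\mu+2s}$; on the Fourier side, denoting by $\Lambda(k,\ell,\mu,s)$ the eigenvalue of $\Delta^s$ on $P_{k,\ell,\nu}^\mu$ (with $\Lambda^2\asymp (k-\ell)^{2s}k^{2s}$ once $k-\ell\ge 2s$), its mode weight $\ell(\ell+d-2)\Lambda^2$ is dominated by the sum $k^2\Lambda^2+[\ell(\ell+d-2)]^{2s+1}$, i.e.\ by the Fourier weights of the desired families. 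For the second cross term, the commutator identity $[\Delta_0,\partial_i]=-\sum_{m\ne i}(D_{i,m}\partial_m+\partial_m D_{i,m})$ lets me rewrite $\Delta_0^s\partial_i=\partial_i\Delta_0^s$ plus a finite sum of lower-order products of $D_{i,m}$'s and $\partial_m$'s, each absorbable by the same Fourier-side analysis. Everything reduces to the elementary mode inequality $n^{4s+2}\lesssim k^2\Lambda(k,\ell,\mu,s)^2+[\ell(\ell+d-2)]^{2s+1}$ for $k>n$, verified by case-splitting on $\ell\le k/2$ (the first term is $\gtrsim n^{4s+2}$) versus $\ell>k/2\ge n/2$ (the second term is $\gtrsim n^{4s+2}$).
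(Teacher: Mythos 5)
Your Step 1 and Step 2 both rest on a Parseval weight for the gradient terms that is not correct, and the error is exactly at the point that makes this theorem delicate. You claim $\sum_i E_{n-1}(\partial_i f)^2_{\mu+1}\asymp \sum_{k>n,\ell,\nu}k(k+2\mu+d-1)|\widehat f_{k,\ell,\nu}|^2$. Since $\partial_i$ commutes with the partial sums by \eqref{eq:comm1}, the left-hand side equals $\sum_{m>n}\|\nabla \proj_m^\mu f\|^2_{\mu+1}$, and by \eqref{eq:nabla-P} the true weight on the mode of total degree $m=k$ and spherical degree $\ell=m-2j$ is $h_{j,m}^\mu(\nabla)/h_{j,m}^\mu=4j(m-j+\mu+\tfrac d2)+2(m-2j)(\mu+1)\asymp k(k-\ell)+k$, not $k^2$. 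On harmonic modes ($j=0$, $\ell=k$) the weighted gradient gains only one power of $k$; this is precisely why the naive bound $E_n(f)_\mu\le cn^{-1}\sum_i E_{n-1}(\partial_i f)_{\mu+1}$ fails and why the angular sums are needed at all (Remark \ref{rem:final}). Indeed, if your $\asymp$ were true, Step 1 would prove the $s=0$ case with the $D_{i,j}$ terms deleted, contradicting the paper's discussion. The same overstatement reappears in Step 2, where you take $k^2\Lambda^2$ as the weight of $\sum_i E_{n-2s-1}(\partial_i\Delta^s f)^2_{\mu+2s+1}$; after accounting for the norm ratio as in \eqref{eq:h-sim-h}, the correct weight is $\asymp\bigl(k(k-\ell)+k\bigr)(k-\ell)^{2s}k^{2s}$.

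The architecture survives the correction: the combined weight of the two right-hand families of \eqref{eq:thm1.3} is $\gtrsim k(k-\ell)(k-\ell)^{2s}k^{2s}+\ell^{2(2s+1)}$, and your case split $\ell\le k/2$ versus $\ell>k/2$ still gives $\gtrsim n^{4s+2}$ for $k>n$; one checks similarly that both cross-term families are dominated mode by mode (the $[\Delta_0,\partial_i]$ commutator is not needed for this, since $\Delta_0^s$ acts diagonally on the modes and $\sum_i E_{n-1}(\partial_i\Delta_0^s f)^2_{\mu+1}$ already has explicit weight $\asymp(k(k-\ell)+k)\,\ell^{2s}(\ell+d-2)^{2s}$). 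But once you are computing exact mode weights, the detour through Theorem \ref{thm:1.1} plus the $s=0$ case is superfluous --- and the paper explicitly warns that \eqref{eq:thm1.3} does not follow from combining \eqref{main-Delta} and \eqref{main-first} --- since you may compare the right-hand weights directly with $n^{4s+2}$. The paper's own proof instead splits the Parseval sum at $j=\lfloor m/4\rfloor$ and replaces exact diagonal identities by the Cauchy--Schwarz coefficient estimates \eqref{eq:Fcoeff-est2} and \eqref{eq:Fcoeff-est1}, which avoids cross terms entirely. So: the strategy is repairable and genuinely different in organization, but as written it asserts a false identity whose truth would trivialize the theorem, and that must be fixed before the rest can stand.
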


In particular, for $s =0$, the estimate \eqref{eq:thm1.3} states that 
\begin{equation} \label{main-first}
E_n(f)_{\mu} \leq \frac{c}{n}\left[ \sum_{i=1}^d E_{n-1}(\partial_i f)_{\mu+1} + \sum_{1\le i<j \le d} E_n(D_{i,j} f)_{\mu}\right].
\end{equation}
Our proof indicates that the second sum in the estimate \eqref{eq:thm1.3}, the spherical derivatives, is necessary; see 
Remark \ref{rem:final}. Another indication that it is necessary can be seen from the study in \cite{DX10}, where the 
characterization of $E_n(f)_\mu$ is given in terms of a modulus of smoothness, whose equivalent $K$-functional is defined by
$$
K_s(f;t)_\mu = \inf_{g} \left \{\|f-g\|_\mu  + t^s \max_{1 \le i \le d} \|\partial_i^s g\|_{\mu+s} + t^s \max_{i< j} \|D_{i,j}^s g\|_\mu \right\}.
$$

We note that \eqref{main-Delta} does not follow from iteration of \eqref{main-first}. Moreover, \eqref{eq:thm1.3} does not 
follow directly from the combination of \eqref{main-Delta} and \eqref{main-first}.
 
By its definition, $E_n(f)_\mu \le \|f\|_\mu$, which allows us to state the estimates in the right hand side
of \eqref{main-Delta} and \eqref{eq:thm1.3} in terms of norms of the derivatives. In particular, we have 
the following corollary. 

\begin{cor} 
Let $s \in \NN$ and let $f \in {W}_2^{2s}(\varpi_\mu, \BB^d)$. Then, for $n \ge 2s$,
\begin{equation} \label{main-Delta-cor}
E_n(f)_{\mu} \leq \frac{c}{n^{2s}}\left(\|\Delta^s f\|_{\mu+2s} + \|\Delta_0^s f\|_{\mu}\right).
\end{equation} 
\end{cor}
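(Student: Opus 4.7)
The plan is to derive the corollary as an immediate consequence of Theorem~\ref{thm:1.1} combined with the trivial inequality $E_n(g)_\nu \le \|g\|_\nu$. The latter holds for any $n \in \NN_0$ and any $g \in L^2(\varpi_\nu, \BB^d)$ simply because the zero polynomial belongs to $\Pi_n^d$, so the infimum defining $E_n(g)_\nu$ is bounded above by $\|g - 0\|_\nu = \|g\|_\nu$.

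Before applying this, I would record that the right-hand side of~\eqref{main-Delta-cor} is finite (otherwise the inequality is vacuous). For $f \in W_2^{2s}(\varpi_\mu, \BB^d)$ every partial derivative $\partial^{\mb} f$ with $|\mb| = 2s$ lies in $L^2(\varpi_{\mu+2s}, \BB^d)$ by the very definition of the weighted Sobolev space, and $\Delta^s f$ is a linear combination of such derivatives; hence $\|\Delta^s f\|_{\mu+2s} < \infty$. If $\|\Delta_0^s f\|_\mu = \infty$ there is nothing to prove, so one may assume that $\Delta_0^s f \in L^2(\varpi_\mu, \BB^d)$ as well.

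Applying Theorem~\ref{thm:1.1} and then bounding each best-approximation error by the corresponding norm yields
$$
E_n(f)_\mu \le \frac{c}{n^{2s}}\bigl[E_{n-2s}(\Delta^s f)_{\mu+2s} + E_n(\Delta_0^s f)_\mu\bigr] \le \frac{c}{n^{2s}}\bigl[\|\Delta^s f\|_{\mu+2s} + \|\Delta_0^s f\|_\mu\bigr],
$$
which is exactly~\eqref{main-Delta-cor}. There is essentially no obstacle in this argument: all the substantive content sits in Theorem~\ref{thm:1.1}, which is assumed already established, and the elementary monotonicity $E_n(g)_\nu \le \|g\|_\nu$ supplies the rest.
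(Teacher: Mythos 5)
Your argument is exactly the paper's: the authors derive the corollary by applying the trivial bound $E_n(g)_\nu \le \|g\|_\nu$ (stated just before the corollary) to both terms on the right-hand side of Theorem~\ref{thm:1.1}. Your additional remarks on the finiteness of the right-hand side are harmless and correct, and the proof is complete.
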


The proof of these results are based on the Fourier expansions in orthogonal polynomials with respect to $\varpi_\mu$. 
The key ingredients are the commuting relations between partial derivatives and the orthogonal projection operators,
and explicit formulas for an explicit basis of orthogonal polynomials and their derivatives. The latter distinguishes this 
study from most of the previous works in this direction, which rely on the closed formula of the reproducing kernel 
instead of working with an explicit basis of orthogonal polynomials. The relations between the orthogonal polynomials 
and their derivatives depend on corresponding relations for an explicit basis of spherical harmonics, which are of 
independent interest.  

The paper is organized as follows. In the next section we recall background materials on orthogonal polynomials and 
orthogonal expansions, and derive several properties on the derivatives of an explicit basis of orthogonal polynomials,
which relies on recursive relations for the derivatives of an explicit basis of spherical harmonics. The recursive 
relations can be used to derive explicit formulas that express the partial derivatives of a family of spherical harmonics 
as a sum of spherical harmonics of one degree lower. The latter relations are of independent interest and we state
explicit formulas for the lower dimensional cases in the Appendix. The Fourier orthogonal expansions on the unit ball 
will be studied in Section 3, which includes the proof of our main theorems.

\section{Spherical harmonics and orthogonal polynomials on the unit ball}
\setcounter{equation}{0}

For $\varpi_\mu(x) = (1-\|x\|^2)^\mu$, $\mu > -1$, on the unit ball $\BB^d$, we define an inner product
\begin{equation} \label{eq:ordinary-ip}
  \la f, g\ra_\mu : = b_\mu \int_{\BB^d} f(x) g(x) \varpi_\mu(x) dx,
\end{equation}
where $b_\mu$ is a the normalizing constant so that $\la 1 ,1 \ra_\mu = 1$. A polynomial $P \in \Pi_n^d$ is called
an orthogonal polynomial of degree $n$ if $\la P, Q\ra_\mu =0$ for all $Q \in \Pi_{n-1}^d$. For $n \in \NN_0$, let $\CV_n^d(\varpi_\mu)$
be the space of orthogonal polynomials of degree $n$ with respect to $\la \cdot,\cdot\ra_\mu$. A mutually orthogonal 
basis of $\CV_n^d(\varpi_\mu)$ can be given in terms of the Jacobi polynomials and spherical harmonics. We will need 
a number of relations on this basis, some of which are new and will be derived in this section. 

In the first subsection below, we recall a basis of spherical harmonics and derive several new properties. The mutually 
orthogonal polynomial basis for $\CV_n^d(\varpi_\mu)$ will be studied in the second subsection. 

\subsection{Spherical harmonics}
Let $\mathcal{H}_n^d$ denote the space of harmonic polynomials of degree $n$. It is known that
$$
    \dim \mathcal{H}_n^d = \binom{n+d-1}{n} - \binom{n+d-3}{n} := a_n^d.
$$
The restriction of $Y \in \mathcal{H}_n^d$ on $\SS^{d-1}$ are called spherical harmonics, which 
are eigenfunctions of the Laplace-Beltrami operator; more specifically, 
\begin{equation}\label{L-B-eigen}
\Delta_0 Y (\xi) = -n(n+d-2)Y(\xi), \quad \forall \,Y\, \in\,\mathcal{H}_n^d, \quad \xi \in \SS^{d-1}.
\end{equation}
It is known that $\Delta_0$ is self-adjoint and, moreover, by \eqref{eq:Delta0=} and the self-adjointness of 
$D_{i,j}$ \cite[(1.8.7)]{DaiX}, 
\begin{equation} \label{eq:intDelta0}
  \int_{\sph} \Delta_0 f(x) g(x) d\s(x) = \int_{\sph}\sum_{1 \le i < j \le d} D_{i,j} f(x) D_{i,j} g(x) d\sigma(x). 
\end{equation}
Spherical harmonics are orthogonal polynomials on $S^{d-1}$. A basis $\{Y_\nu^n: 1\le \nu \le a_{n}^d\}$ is 
a mutually orthogonal basis for $\mathcal{H}_{n}^d$ if 
\begin{equation}\label{sphhar-ort}
\frac{1}{\s_{d-1}} \, \int_{S^{d-1}} \, Y^n_\nu(\xi)\, Y^m_\eta(\xi)\, d\s(\xi) =  h_\nu^n \delta_{n,m}\,\delta_{\nu,\eta},
    \quad 1\le \nu \le a_n^d, \quad 1\le \eta \le a_m^d,
\end{equation}
where $d\s$ is the surface measure on $S^{d-1}$, $\s_{d-1}$ is the surface area of $S^{d-1}$ and $h_\nu^n$ is 
the $L^2$ norm of $Y_\nu^n$. The basis is called orthonormal if $h_\nu^n =1$. If $\{Y_\nu^n\}$ is an orthonormal
basis, then it satisfies the addition formula 
$$
  \sum_{1 \le \nu \le a_n^d} Y_\nu^n(\xi) Y_\nu^n(\varrho) = \frac{n+\l}{\l} C_n^\l (\la \xi,\varrho \ra), \qquad \l = \f{d-2}{2},\quad 
   \xi,\varrho \in \sph, 
$$
where $C_n^\l$ is the $n$th Gegenbauer, or ultraspherical, polynomial. 

For our study, we need to work with an explicit mutually orthogonal basis for $\CH_n^d$, which we choose to be 
the standard one given in terms of the Gegenbauer polynomials (see, for example, \cite{DaiX, DX}). However,
instead of writing this basis in spherical coordinates, we choose to write them in cartesian coordinates. The basis 
below is given in \cite[p. 115]{DX} but with the order of the variables inverted for our convenience. 

Let $T_n(t)$ and $U_n(t)$ denote the Chebyshev polynomials of the first and the second kind, respectively. 
Define 
\begin{align} \label{eq:harm-d=2}
\begin{split}
g_{0,n} (x_1, x_2) &= (x_1^2 + x_2^2)^{n/2} 
T_{n}\left( x_2 (x_1^2 + x_2^2)^{-1/2}\right),\\
g_{1,n-1} (x_1, x_2) &= x_1 (x_1^2 + x_2^2)^{(n-1)/2} 
U_{n-1}\left( x_2 (x_1^2 + x_2^2)^{-1/2}\right).
\end{split}
\end{align}
Evidently these are homogeneous polynomials of degree $n$ and $\{g_{0,n}, g_{1,n-1}\}$ constitutes a mutually 
orthogonal basis for $\mathcal{H}_n^2$. For $d>2$ and $\mathbf{n} = (n_1, n_2, n_3, \ldots , n_d) \in \NN_0^d$ 
with $n_1 = 0$ or $1$, define 
\begin{equation} \label{spher-basis}
Y_{\mathbf{n}}(x) = g_{n_1,n_2}(x_1,x_2) \prod_{j=3}^d (x_1^2 + \ldots + x_j^2)^{n_j/2} 
C_{n_j}^{\l_j} \left( x_j (x_1^2 + \ldots + x_j^2)^{-1/2}\right),
\end{equation}
where 
$$
\l_j = \l_j (n_1,\ldots, n_{j-1}) := \sum_{i=1}^{j-1} n_i + \frac{j-2}{2}.
$$
Then $\{Y_\mathbf{n}; |\mathbf{n}| = n \,\, \hbox{with} \,\, n_1 = \hbox{$0$ or $1$}\}$ is a mutually orthogonal basis 
of $\mathcal{H}_n^d$. We need information on two operations on this basis, one is partial derivatives $\partial_i$ 
and the other is multiplication by $x_i$. They are related by the orthogonal projection operator 
$$
\proj_{n,\SS}^d: \CP_n^d \mapsto \CH_n^d
$$
from $\CP_n^d$, the space of homogeneous polynomials of degree $n$ in $d$ variables, into $\CH_n^d$. Indeed,
it is known (cf. \cite[(1.2.1)]{DaiX}) that 
$$
   \proj_{n,\SS}^d P = \sum_{j=0}^{\lfloor \f{n}2 \rfloor} \frac{1}{4^j j! (-n+2-d/2)_j} \|x\|^{2j}\Delta^j P, 
$$
which implies, for a spherical harmonic $Y_{\mathbf{n}} \in \CH_n^d$, 
\begin{equation} \label{proj-def}
   \proj_{n+1,\SS}^d (x_i Y_{\mathbf{n}}(x)) = x_i Y_{\mathbf{n}}(x) 
           - \frac{1}{2 (n+(d-2)/2)} \|x\|^{2} \partial_i Y_{\mathbf{n}}(x). 
\end{equation}
The basis \eqref{spher-basis} satisfies the following property, which is of independent interest. 

\begin{thm} \label{thm:diffY}
Let $\mathbf{n} = (n_1, n_2, \ldots, n_d) \in \NN_0^d$ with $n_1 = 0$ or $1$ and $|\mathbf{n}| = n$. Then 
\begin{enumerate}
\item $\partial_i Y_{\mathbf{n}}(x)$ is an spherical harmonic of degree $n-1$ and
$$
\langle  \partial_i Y_{\mathbf{n}}, Y_{\mathbf{m}} \rangle_{\mathbb{S}^{d-1}} \neq 0, \qquad |\mathbf{m}| =n-1
$$
for at most $2^{d-2}$ many $\mathbf{m} \in \NN_0^d$ with $m_1 = 0$ or $1$. 
\item Let  $x_i$ denote also the operator of multiplication by $x_i$. Then 
$$
\langle \proj_{n+1,\SS}^{d} (x_i Y_{\mathbf{n}}), Y_{\mathbf{m}} \rangle_{\mathbb{S}^{d-1}} \neq 0, \qquad |\mathbf{m}| = n+1,
$$
for at most $2^{d-2}$ many $\mathbf{m} \in \NN_0^d$ with $m_1 =0$ or $1$. 
\end{enumerate}
\end{thm}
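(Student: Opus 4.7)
My plan is to prove parts (1) and (2) jointly by induction on $d$, using explicit Gegenbauer identities for both the derivative $\partial_i$ and (projected) multiplication by $x_i$ acting on the factors of $Y_{\mathbf{n}}$.

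That $\partial_i Y_{\mathbf{n}}$ is a spherical harmonic of degree $n-1$ is immediate, since $\Delta$ and $\partial_i$ commute and $Y_{\mathbf{n}}$ is a harmonic homogeneous polynomial of degree $n$; the orthogonality of spherical harmonics of different degrees then reduces the claim to bounding the number of nonzero basis coefficients of $\partial_i Y_{\mathbf{n}}$ in $\CH_{n-1}^d$. I would write $Y_{\mathbf{n}} = Y'_{\mathbf{n}'}(x_1,\ldots,x_{d-1})\, G_{n_d}^{\lambda_d}(x_d, r_d)$ with $G_n^\lambda(s,t):= t^n C_n^\lambda(s/t)$, $\mathbf{n}'=(n_1,\ldots,n_{d-1})$, and $r_j^2 = x_1^2+\cdots+x_j^2$. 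Three Gegenbauer identities do the heavy lifting: the standard $(C_n^\lambda)'(z)=2\lambda C_{n-1}^{\lambda+1}(z)$; the collapse identity
$$n C_n^\lambda(z)\,-\,2\lambda z\, C_{n-1}^{\lambda+1}(z)\,=\,-2\lambda\, C_{n-2}^{\lambda+1}(z),$$
obtained by feeding the three-term recurrence of $C_n^{\lambda+1}$ into the telescoping relation $C_n^{\lambda+1}=C_{n-2}^{\lambda+1}+\frac{n+\lambda}{\lambda}C_n^\lambda$; and the index-lowering identity
$$C_n^\lambda(z)\,-\,\tfrac{2\lambda}{2\lambda-1}(1-z^2)\,C_{n-2}^{\lambda+1}(z)\,=\,\tfrac{(n+2\lambda-2)(n+2\lambda-1)}{2(2\lambda-1)(\lambda-1)}\,C_n^{\lambda-1}(z).$$
Together the first two give $\partial_d G_{n_d}^{\lambda_d}(x_d,r_d)=(n_d+2\lambda_d-1)\,r_d^{n_d-1}C_{n_d-1}^{\lambda_d}(x_d/r_d)$, so $\partial_d Y_{\mathbf{n}}$ collapses to the single basis element $Y_{\mathbf{n}-\mathbf{e}_d}$, while for $i<d$ one has $\partial_i G_{n_d}^{\lambda_d}(x_d,r_d) = -2\lambda_d\,x_i\,G_{n_d-2}^{\lambda_d+1}(x_d,r_d)$.

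For the inductive step of part (1) with $i<d$, the product rule yields
$$\partial_i Y_{\mathbf{n}}\;=\;(\partial_i Y'_{\mathbf{n}'})\,G_{n_d}^{\lambda_d}(x_d,r_d)\,-\,2\lambda_d\,x_i\,Y'_{\mathbf{n}'}\,G_{n_d-2}^{\lambda_d+1}(x_d,r_d).$$
Using \eqref{proj-def} in dimension $d-1$ to write $x_i Y'_{\mathbf{n}'} = \proj^{d-1}_{|\mathbf{n}'|+1,\SS}(x_i Y'_{\mathbf{n}'}) + \frac{r_{d-1}^2}{2\lambda_d-1}\,\partial_i Y'_{\mathbf{n}'}$ (noting $|\mathbf{n}'|+(d-3)/2=\lambda_d-1/2$) and substituting $r_{d-1}^2 = r_d^2-x_d^2$, the index-lowering identity above collapses the two $\partial_i Y'_{\mathbf{n}'}$ contributions into a single scalar multiple of $(\partial_i Y'_{\mathbf{n}'})\,G_{n_d}^{\lambda_d-1}(x_d,r_d)$; this matches the basis-element form for $m_d=n_d$ and $|\mathbf{m}'|=|\mathbf{n}'|-1$, since in that case $\lambda_d(\mathbf{m})=\lambda_d-1$. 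The leftover piece is a scalar times $\proj^{d-1}_{|\mathbf{n}'|+1,\SS}(x_i Y'_{\mathbf{n}'})\cdot G_{n_d-2}^{\lambda_d+1}(x_d,r_d)$, which already has basis-element form for $m_d=n_d-2$, $|\mathbf{m}'|=|\mathbf{n}'|+1$ (since then $\lambda_d(\mathbf{m})=\lambda_d+1$). By the inductive hypothesis both $\partial_i Y'_{\mathbf{n}'}$ and $\proj^{d-1}(x_i Y'_{\mathbf{n}'})$ decompose into at most $2^{d-3}$ basis elements of $\CH^{d-1}$, giving the bound $2\cdot 2^{d-3}=2^{d-2}$. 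The base case $d=2$ is immediate from $g_{0,k}+i g_{1,k-1}=(x_2+i x_1)^k$, which makes every $\partial_i g_{n_1,n_2}$ a single basis element.

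For part (2), formula \eqref{proj-def} and the fact that $\partial_i Y_{\mathbf{n}}$ is a spherical harmonic of degree $n-1$ force $\la\|x\|^2\partial_i Y_{\mathbf{n}},Y_{\mathbf{m}}\ra_{\SS^{d-1}}=\la\partial_i Y_{\mathbf{n}},Y_{\mathbf{m}}\ra_{\SS^{d-1}}=0$ for every $|\mathbf{m}|=n+1$, so $\la\proj^d_{n+1,\SS}(x_i Y_{\mathbf{n}}),Y_{\mathbf{m}}\ra_{\SS^{d-1}}=\la x_i Y_{\mathbf{n}},Y_{\mathbf{m}}\ra_{\SS^{d-1}}$ and it suffices to count nonzero inner products of the latter type. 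Using the homogeneous three-term recurrence $sG_n^\lambda=\frac{1}{2(n+\lambda)}\bigl[(n+1)G_{n+1}^\lambda+(n+2\lambda-1)t^2 G_{n-1}^\lambda\bigr]$ on the $d$-th factor, the $r_d^2$-piece is $\|x\|^2$ times a lower-degree harmonic and so contributes nothing to the degree-$(n+1)$ projection; for $i<d$, the same inductive doubling as in part (1) yields the same $2^{d-2}$ bound. The main technical obstacle is the collapse step in part (1): the two $\partial_i Y'_{\mathbf{n}'}$ terms carry \emph{different} Gegenbauer parameters ($\lambda_d$ versus $\lambda_d+1$), and one needs the explicit index-lowering identity to reconcile them into a single multiple of $G_{n_d}^{\lambda_d-1}$; without this collapse, naively iterating the telescoping $C_n^{\lambda+1}=C_{n-2}^{\lambda+1}+\tfrac{n+\lambda}{\lambda}C_n^\lambda$ would produce $O(n)$ distinct shift patterns in $m_d$ and destroy the $2^{d-2}$ bound.
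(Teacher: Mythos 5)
Your argument is correct and follows essentially the same route as the paper: the same product decomposition $Y_{\mathbf{n}}=Y_{\mathbf{n}'}F_{n_d}^{\lambda_d}$, the same Gegenbauer derivative and index-lowering identities, the same splitting of $x_iY_{\mathbf{n}'}$ via \eqref{proj-def} with the collapse of the two $\partial_i Y_{\mathbf{n}'}$ contributions, and the same joint induction on $d$ that doubles the count at each step from the base case $d=2$. The only (harmless) deviation is in part (2), where you replace the paper's explicit recursion for $\proj_{n+1,\SS}^{d}(x_iY_{\mathbf{n}})$ by the observation that the $\|x\|^{2}\partial_iY_{\mathbf{n}}$ correction is orthogonal on the sphere to all harmonics of degree $n+1$.
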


Our proof below will give recursive formulas that can be used to derive explicit expressions for writing 
$\partial_i Y_\mathbf{n}$ as a linear combination of $Y_\mathbf{m}$. For lower dimensions, these formulas are given
in the Appendix. For large $d$, the formulas can be complicated. For our purpose, however, we do not need explicit 
formulas. 

The key ingredient of the recursive relations for the proof of the theorem lies in the product structure of $Y_\mathbf{n}$. 
For $n\in \mathbb{N}$ and $\lambda > -1/2$, define
\begin{align*}
F_n^{\l} (x) : = (x_1^2 + \ldots + x_d^2)^\frac{n}{2} 
C_{n}^{\l} \left( \frac{x_d}{\sqrt{x_1^2 + \ldots + x_d^2}}\right).
\end{align*}
Let $d>2$. For $x = (x_1, x_2, \ldots, x_d) \in \RR^d$ and $\mathbf{n} = (n_1, n_2, \ldots, n_d) \in \NN^d$, we 
denote $x' = (x_1, x_2, \ldots, x_{d-1})$ and $\mathbf{n'} = (n_1, n_2, \ldots, n_{d-1})$. Then, by \eqref{spher-basis},
\begin{equation} \label{eq:Yn-recur}
Y_\mathbf{n}(x) = Y_\mathbf{n'}(x') F_{n_d}^{\l_d}(x),
\end{equation}
where $Y_\mathbf{n'}(x')$ is a spherical harmonic in $\CH_{n-n_d}^{d-1}$. 

\begin{lem} 
The partial derivatives of $F_n^\l$ satisfy
\begin{align} \label{partial_i_F}
\begin{split}
\partial_i F_n^{\l} (x) &= - 2\l x_i F_{n-2}^{\l+1} (x), \qquad i = 1, 2, \ldots, d-1, \\
\partial_d F_n^{\l} (x) & = (n + 2\l - 1) F_{n-1}^{\l} (x).  
\end{split}
\end{align}
\end{lem}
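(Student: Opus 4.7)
The plan is to reduce both identities to standard Gegenbauer contiguous relations by a direct chain-rule computation. Introduce the shorthand $r = \sqrt{x_1^2+\cdots+x_d^2}$ and $t = x_d/r$, so that $F_n^\l(x) = r^n C_n^\l(t)$. First I would record the derivatives of $r$ and $t$: for $i < d$, $\partial_i r = x_i/r$ and $\partial_i t = -x_i x_d / r^3$, while $\partial_d r = x_d/r$ and $\partial_d t = (r^2 - x_d^2)/r^3$. I would also use the standard fact $(C_n^\l)'(t) = 2\l\, C_{n-1}^{\l+1}(t)$ throughout.

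For the first identity ($i < d$), applying the product and chain rules gives
\begin{equation*}
\partial_i F_n^\l(x) = n\, x_i\, r^{n-2} C_n^\l(t) - 2\l\, x_d x_i\, r^{n-3} C_{n-1}^{\l+1}(t).
\end{equation*}
Factoring $x_i r^{n-2}$ and substituting $x_d/r = t$ reduces the claim $\partial_i F_n^\l = -2\l x_i F_{n-2}^{\l+1}$ to the one-variable identity
\begin{equation*}
n\, C_n^\l(t) - 2\l t\, C_{n-1}^{\l+1}(t) = -2\l\, C_{n-2}^{\l+1}(t),
\end{equation*}
which is a classical contiguous relation for Gegenbauer polynomials.

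For the second identity ($i = d$), the same chain-rule calculation yields
\begin{equation*}
\partial_d F_n^\l(x) = n\, x_d\, r^{n-2} C_n^\l(t) + 2\l\,(r^2 - x_d^2)\, r^{n-3} C_{n-1}^{\l+1}(t).
\end{equation*}
Factoring $r^{n-1}$ and using $x_d/r = t$, the claim becomes
\begin{equation*}
n t\, C_n^\l(t) + 2\l (1-t^2)\, C_{n-1}^{\l+1}(t) = (n + 2\l - 1)\, C_{n-1}^\l(t).
\end{equation*}
Replacing $2\l\, C_{n-1}^{\l+1}(t)$ by $(C_n^\l)'(t)$ converts this to the standard Gegenbauer derivative relation $(1-t^2)(C_n^\l)'(t) = -nt\, C_n^\l(t) + (n+2\l-1)\, C_{n-1}^\l(t)$, which is well known.

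There is no real obstacle: the whole argument is a mechanical chain-rule computation, and the only non-trivial ingredients are two textbook identities for Gegenbauer polynomials which can simply be cited. The only care needed is keeping the powers of $r$ correct and noticing that the resulting expressions in $x$ factor cleanly back into the form $r^{n-2} C_{n-2}^{\l+1}(t)$ and $r^{n-1} C_{n-1}^\l(t)$.
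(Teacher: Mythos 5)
Your proof is correct and follows essentially the same route as the paper: a direct chain-rule computation on $F_n^\l = r^n C_n^\l(x_d/r)$, reducing the two claims to exactly the Gegenbauer identities the paper cites (Szeg\H{o} (4.7.28) for $i<d$ and the second equality of (4.7.27) for $i=d$). No gaps.
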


\begin{proof}
Let $r = \sqrt{x_1^2 + \ldots + x_d^2}$. Using the fact that $\f{d}{dx} C_n^\l(x) = 2 \l C_{n-1}^{\l + 1}(x)$, 
we obtain, for $i = 1, 2, \ldots, d-1$, 
\begin{align*}
\partial_i F_n^{\l} (x)  &= x_i r^{n-2} \left[ n C_{n}^{\l} \left( \frac{x_d}{r}\right) 
- 2 \l \frac{x_d}{r} C_{n-1}^{\l+1} \left( \frac{x_d}{r}\right)\right]\\
&= - 2 \l x_i r^{n-2} C_{n-2}^{\l+1}\left(\frac{x_d}{r}\right),
\end{align*}
where the second step follows from \cite[(4.7.28)]{Sz}. Moreover, for $i=d$, we obtain
\begin{align*}
\partial_d F_n^{\l} (x)  &= r^{n-1} \left[ n \frac{x_d}{r} C_{n}^{\l} \left( \frac{x_d}{r}\right) 
    + 2 \l \left(1 - \frac{x_d^2}{r^2}\right) C_{n-1}^{\l+1} \left( \frac{x_d}{r}\right)\right]\\
&= (n + 2 \l - 1) r^{n-1} C_{n-1}^{\l}\left(\frac{x_d}{r}\right),
\end{align*}
where the second step follows from the second equality of \cite[(4.7.27)]{Sz}. 
\end{proof}
 
\begin{prop} \label{prop_partial}
Let $n' := |\mathbf{n}'| = n - n_d$. For $i = 1, 2, \ldots, d-1$, 
\begin{align} 
\partial_i Y_{\mathbf{n}}(x) =& - 2\l_d \proj_{n'+1,\SS}^{d-1} (x_i Y_{\mathbf{n'}}(x')) 
F_{n_d-2}^{\l_d+1}(x)    \label{partial_i_Y} \\
 &+ \frac{(n_d + 2\l_d -1)(n_d + 2\l_d -2)}{(2\l_d -1)(2\l_d -2)}
 \partial_i Y_{\mathbf{n'}}(x') F_{n_d}^{\l_d-1} (x), \notag \\
\partial_d Y_{\mathbf{n}}(x) =&\, (n_d + 2\l_d - 1) Y_{\mathbf{n'}}(x') F_{n_d-1}^{\l_d} (x). \label{partial_d_Y}
\end{align}
\end{prop}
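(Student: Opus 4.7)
My plan is to exploit the product structure \eqref{eq:Yn-recur}, namely $Y_{\mathbf{n}}(x) = Y_{\mathbf{n'}}(x') F_{n_d}^{\l_d}(x)$, together with the product rule and the preceding lemma. The formula \eqref{partial_d_Y} is immediate: since $Y_{\mathbf{n'}}(x')$ does not depend on $x_d$, $\partial_d$ acts only on the second factor, and the second line of \eqref{partial_i_F} supplies $\partial_d F_{n_d}^{\l_d}(x) = (n_d + 2\l_d - 1) F_{n_d - 1}^{\l_d}(x)$.

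For $1 \le i \le d-1$ both factors contribute. The product rule together with the first line of \eqref{partial_i_F} gives
\begin{equation*}
 \partial_i Y_{\mathbf{n}}(x) = \partial_i Y_{\mathbf{n'}}(x')\,F_{n_d}^{\l_d}(x) - 2\l_d\, x_i Y_{\mathbf{n'}}(x')\, F_{n_d-2}^{\l_d+1}(x).
\end{equation*}
To produce the projected term appearing in \eqref{partial_i_Y}, I apply the identity \eqref{proj-def} in $d-1$ variables to $Y_{\mathbf{n'}} \in \CH_{n'}^{d-1}$; noting $2(n'+(d-3)/2)=2\l_d-1$, this yields
\begin{equation*}
 x_i Y_{\mathbf{n'}}(x') = \proj_{n'+1,\SS}^{d-1}(x_i Y_{\mathbf{n'}})(x') + \frac{1}{2\l_d-1}\|x'\|^2\, \partial_i Y_{\mathbf{n'}}(x').
\end{equation*}
Substituting and collecting the $\partial_i Y_{\mathbf{n'}}$ terms reduces \eqref{partial_i_Y} to the one-variable Gegenbauer identity
\begin{equation*}
 C_{n}^{\l}(t) - \frac{2\l}{2\l-1}(1-t^2)\,C_{n-2}^{\l+1}(t) = \frac{(n+2\l-1)(n+2\l-2)}{(2\l-1)(2\l-2)}\,C_n^{\l-1}(t),
\end{equation*}
obtained after setting $t = x_d/\|x\|$ and $\|x'\|^2 = \|x\|^2(1-t^2)$ inside the definition of $F_n^\l$.

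Verifying this one-variable identity is the main obstacle. I plan to do it in two steps. First, applying \cite[(4.7.27)]{Sz} with index $n-1$ to replace $2\l(1-t^2) C_{n-2}^{\l+1}(t)$ by $(n+2\l-2) C_{n-2}^\l(t) - (n-1) t C_{n-1}^\l(t)$, and then using the standard three-term recurrence $n C_n^\l(t) = 2(n+\l-1) t C_{n-1}^\l(t) - (n+2\l-2) C_{n-2}^\l(t)$ to eliminate $C_{n-2}^\l(t)$, I expect the left side to collapse to $\frac{n+2\l-1}{2\l-1}\bigl[C_n^\l(t) - t C_{n-1}^\l(t)\bigr]$. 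Second, I would verify $C_n^\l(t) - t C_{n-1}^\l(t) = \frac{n+2\l-2}{2\l-2} C_n^{\l-1}(t)$ directly from the generating function by checking $(1-tr)(1-2tr+r^2)^{-\l} = \sum_n \frac{n+2\l-2}{2\l-2} C_n^{\l-1}(t)\, r^n$, which follows quickly from the formula $r\frac{\partial}{\partial r}(1-2tr+r^2)^{1-\l} = 2(\l-1)(rt - r^2)(1-2tr+r^2)^{-\l}$. Combining these two reductions gives precisely the coefficient $\frac{(n+2\l-1)(n+2\l-2)}{(2\l-1)(2\l-2)}$, and substituting back completes \eqref{partial_i_Y}.
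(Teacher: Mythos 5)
Your argument is correct and follows essentially the same route as the paper: product rule on $Y_{\mathbf{n}}=Y_{\mathbf{n'}}F_{n_d}^{\l_d}$, the derivative formulas \eqref{partial_i_F}, the projection identity \eqref{proj-def} in $d-1$ variables with $2n'+d-3=2\l_d-1$, and then the reduction to the single Gegenbauer identity $C_n^{\l}(t)-\tfrac{2\l}{2\l-1}(1-t^2)C_{n-2}^{\l+1}(t)=\tfrac{(n+2\l-1)(n+2\l-2)}{(2\l-1)(2\l-2)}C_n^{\l-1}(t)$. The only (immaterial) difference is that you verify this classical identity by combining the differentiation relation, the three-term recurrence, and a generating-function check of $C_n^\l(t)-tC_{n-1}^\l(t)=\tfrac{n+2\l-2}{2\l-2}C_n^{\l-1}(t)$, whereas the paper simply cites \cite[(4.7.27), (4.7.29)]{Sz}; both verifications are sound.
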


\begin{proof}
For $i = 1, 2, \ldots, d-1$, using \eqref{partial_i_F} and \eqref{proj-def} we deduce
\begin{align*}
\partial_i Y_{\mathbf{n}}(x) =& \partial_i Y_\mathbf{n'}(x') \, F_{n_d}^{\l_d}(x) 
  - 2 \l_d x_i Y_\mathbf{n'}(x') F_{n_d-2}^{\l_d+1}(x) \\
=& - 2 \l_d \proj_{n'+1,\SS}^{d-1} (x_i Y_{\mathbf{n'}}(x')) F_{n_d-2}^{\l_d+1}(x)\\
&+ \partial_i Y_\mathbf{n'}(x') \left[ F_{n_d}^{\l_d}(x) - \frac{2\l_d}{2\l_d - 1} \|x'\|^2 F_{n_d-2}^{\l_d+1}(x)\right],
\end{align*}
where we have used the fact that $2n' + d-3 = 2 \l_d -1$. Since $\|x'\|^2 = r^2 - x_d^2$, we can write
\begin{align*}
 F_{n_d}^{\l_d}(x) &- \frac{2\l_d}{2\l_d - 1} \|x'\|^2 F_{n_d-2}^{\l_d+1}(x)\\
 &= r^{n_d} \left[ C_{n_d}^{\l_d} \left( \frac{x_d}{r}\right)  - \frac{2\l_d}{2\l_d - 1}
  \left(1 - \frac{x_d^2}{r^2}\right)  C_{n_d-2}^{\l_d+1} \left( \frac{x_d}{r}\right) \right] \\
 &= \frac{(n_d + 2\l_d -1)(n_d + 2\l_d -2)}{(2\l_d -1)(2\l_d -2)} r^{n_d}  C_{n_d}^{\l_d-1} \left( \frac{x_d}{r}\right),
\end{align*}
where the second step follows from the first identity of \cite[(4.7.27)]{Sz}, using first $2 \l_d C_{n_d-2}^{\l_d+1} =
\frac{d}{dx} C_{n_d-1}^{\l_d}$, and the second identity of \cite[(4.7.29)]{Sz}. This establishes \eqref{partial_i_Y}. 
Finally, \eqref{partial_d_Y} is a direct consequence of the second identity in \eqref{partial_i_F}.
\end{proof}

A similar recurrence relation can be deduced for the projection operator $\proj_{n+1,\SS}^{d}$ applied
to $x_i Y_{\mathbf{n}}(x)$.

\begin{prop} \label{prop_proj}
Let $n' = |\mathbf{n}'| = n - n_d$. For $i = 1, 2, \ldots, d-1$, 
\begin{align} 
\proj_{n+1,\SS}^{d} (x_i Y_{\mathbf{n}}(x)) =&\,  \frac{\l_d}{n_d+\l_d} 
\proj_{n'+1,\SS}^{d-1} (x_i Y_{\mathbf{n'}}(x')) F_{n_d}^{\l_d+1}(x) -  \label{proj_i_Y} \\
 &- \frac{(n_d + 1)(n_d + 2)}{(2\l_d -1)(2\l_d -2)2(n_d+\l_d)}
\partial_i Y_{\mathbf{n'}}(x') F_{n_d+2}^{\l_d-1} (x), \notag \\
\proj_{n+1,\SS}^{d} (x_d Y_{\mathbf{n}}(x)) =&\, \frac{n_d+1}{2(n_d + \l_d)} Y_{\mathbf{n'}}(x') F_{n_d+1}^{\l_d} (x). \label{proj_d_Y}
\end{align}
\end{prop}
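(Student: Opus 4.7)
The plan is to reduce Proposition~\ref{prop_proj} to Proposition~\ref{prop_partial} combined with a few classical Gegenbauer identities. The starting point is the defining formula \eqref{proj-def}, which, using the product decomposition \eqref{eq:Yn-recur} together with the identity $n + (d-2)/2 = n_d + \l_d$, gives
$$
\proj_{n+1,\SS}^d(x_i Y_{\mathbf{n}}(x)) = x_i Y_{\mathbf{n'}}(x') F_{n_d}^{\l_d}(x) - \frac{\|x\|^2}{2(n_d+\l_d)} \partial_i Y_{\mathbf{n}}(x).
$$

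For \eqref{proj_d_Y}, I would substitute \eqref{partial_d_Y} for $\partial_d Y_{\mathbf{n}}$ and factor out $Y_{\mathbf{n'}}(x')$; the claim then reduces to the Gegenbauer three-term recurrence $(n+1) C_{n+1}^\l(t) = 2(n+\l)\, t\, C_n^\l(t) - (n + 2\l -1) C_{n-1}^\l(t)$ evaluated at $t = x_d/\|x\|$ and homogenized by multiplying through by $\|x\|^{n_d+1}$, which turns $C$'s into the corresponding $F$'s.

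For \eqref{proj_i_Y}, with $i < d$, I would substitute \eqref{partial_i_Y} for $\partial_i Y_{\mathbf{n}}$ and then use the $(d-1)$-dimensional analogue of \eqref{proj-def}, noting that $2(n' + (d-3)/2) = 2\l_d - 1$, to rewrite
$$
x_i Y_{\mathbf{n'}}(x') = \proj_{n'+1,\SS}^{d-1}(x_i Y_{\mathbf{n'}}(x')) + \frac{\|x'\|^2}{2\l_d -1} \partial_i Y_{\mathbf{n'}}(x').
$$
Collecting terms produces a linear combination of $\proj_{n'+1,\SS}^{d-1}(x_i Y_{\mathbf{n'}}(x'))$ and $\partial_i Y_{\mathbf{n'}}(x')$. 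Setting $t = x_d/\|x\|$ and normalizing by the appropriate power of $\|x\|$, the coefficient of the former collapses via the standard parameter-shifting identity $C_n^\l(t) = \frac{\l}{n+\l}[C_n^{\l+1}(t) - C_{n-2}^{\l+1}(t)]$ to $\frac{\l_d}{n_d+\l_d}F_{n_d}^{\l_d+1}(x)$; the coefficient of the latter, after applying $\|x'\|^2 = \|x\|^2 - x_d^2$, reduces to a classical identity expressing $(1-t^2)C_{n_d}^{\l_d}(t)$ as a combination of $C_{n_d}^{\l_d-1}(t)$ and $C_{n_d+2}^{\l_d-1}(t)$, which can be obtained by chaining the relations \cite[(4.7.27), (4.7.29)]{Sz}.

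The main obstacle is the bookkeeping of the rational constants in the case $i<d$: matching the Pochhammer-like denominators $(2\l_d-1)(2\l_d-2)2(n_d+\l_d)$ on both sides requires careful arrangement, and the second Gegenbauer identity may need a preliminary derivation via the $\l \mapsto \l-1$ shift of the first. Otherwise the structure of the argument mirrors the proof of Proposition~\ref{prop_partial} exactly.
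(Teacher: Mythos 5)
Your proposal is correct and follows essentially the same route as the paper's proof: start from \eqref{proj-def} with the factorization \eqref{eq:Yn-recur}, substitute the derivative formulas of Proposition \ref{prop_partial}, trade $x_i Y_{\mathbf{n'}}(x')$ for $\proj_{n'+1,\SS}^{d-1}(x_i Y_{\mathbf{n'}}(x'))$ via the $(d-1)$-dimensional version of \eqref{proj-def}, and collapse the two resulting Gegenbauer brackets using \cite[(4.7.27), (4.7.29), (4.7.17)]{Sz}. The identities you cite are exactly the ones the paper uses, and the remaining constant bookkeeping you flag is routine and comes out as stated.
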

\begin{proof}
For $i = 1, 2, \ldots, d-1$, using \eqref{proj-def} and \eqref{eq:Yn-recur}, we obtain
\begin{align*}
\proj_{n+1,\SS}^{d} & (x_i Y_{\mathbf{n}}(x))
  = x_i Y_\mathbf{n'}(x') \, F_{n_d}^{\l_d}(x) 
- \frac{r^2}{2(n_d + \l_d)}  \partial_i \left(Y_\mathbf{n'}(x') \, F_{n_d}^{\l_d}(x)\right),
\end{align*}
which implies, by \eqref{partial_i_Y}, that 
\begin{align*}
 \proj_{n+1,\SS}^{d} & (x_i Y_{\mathbf{n}}(x)) = \proj_{n'+1,\SS}^{d-1} (x_i Y_{\mathbf{n'}}(x')) \left[ F_{n_d}^{\l_d}(x) +  
\frac{\l_d}{n_d + \l_d} r^2 F_{n_d-2}^{\l_d+1}(x)\right] \\
&+ \partial_i Y_{\mathbf{n'}}(x') \left[ \frac{\|x'\|^2}{2\l_d - 1} F_{n_d}^{\l_d}(x) 
- \frac{(n_d + 2\l_d -1)(n_d + 2\l_d -2)}{2(n_d+\l_d)(2\l_d - 1)(2\l_d -2)} r^2 F_{n_d}^{\l_d-1}(x)
\right].
\end{align*}
The terms in the first bracket in the right hand side are equal to, by the second identity in \cite[(4.7.29)]{Sz}, 
\begin{align*}
  r^{n_d} \left[C_{n_d}^{\l_d}\left( \frac{x_d}{r}\right) +  \frac{\l_d}{n_d + \l_d} C_{n_d-2}^{\l_d+1}\left( \frac{x_d}{r}\right)\right]
= \frac{\l_d}{n_d + \l_d} r^{n_d} C_{n_d}^{\l_d+1}\left( \frac{x_d}{r}\right), 
\end{align*}
whereas the terms in the second bracket are equal to, using the first identity of \cite[(4.7.27)]{Sz} again, 
\begin{align*}
& \frac{r^{n_d+2}}{2\l_d - 1}\left[ \left(1 - \frac{x_d^2}{r^2}\right) C_{n_d}^{\l_d}\left( \frac{x_d}{r}\right) 
- \frac{(n_d + 2\l_d -1)(n_d + 2\l_d -2)}{2(n_d+\l_d)(2\l_d -2)} C_{n_d}^{\l_d-1}\left( \frac{x_d}{r}\right)\right] \\
& \qquad\qquad\qquad 
 = - \frac{(n_d +1)(n_d + 2)}{2(n_d+\l_d)(2\l_d -1)(2\l_d -2)} r^{n_d+2} C_{n_d+2}^{\l_d-1}\left( \frac{x_d}{r}\right).
\end{align*}
Putting these together, we have established \eqref{proj_i_Y}. The proof of \eqref{proj_d_Y} can be carried out
similarly as follows, 
\begin{align*}
\proj_{n+1,\SS}^{d} (x_d Y_{\mathbf{n}}(x))  
 & = x_d Y_\mathbf{n'}(x') \, F_{n_d}^{\l_d}(x) 
- \frac{r^2}{2(n_d + \l_d)}  \partial_d \left(Y_\mathbf{n'}(x') \, F_{n_d}^{\l_d}(x)\right)\\
&= Y_\mathbf{n'}(x') \left[ x_d F_{n_d}^{\l_d}(x) 
- \frac{r^2}{2(n_d + \l_d)} (n_d + 2\l_d-1)   F_{n_d-1}^{\l_d}(x)\right]\\
&= \frac{n_d+1}{2(n_d + \l_d)} Y_{\mathbf{n'}}(x') F_{n_d+1}^{\l_d} (x),
\end{align*}
where the last step follows from the three term recurrence relation of the Gegenbauer polynomials \cite[(4.7.17)]{Sz}.
\end{proof}

\medskip

\noindent
{\it Proof of Theorem \ref{thm:diffY}}
The fact that $\partial_i Y_{\mathbf{n}}(x)$ is an element of $\CH_{n-1}^d$ follows from the commutativity of 
$\partial_i$ and $\Delta$. By its definition, $\proj_{n+1,\SS}^d (x_i Y_\mathbf{n})$ is an element of $\CH_{n+1}^d$. 
Let $Y_{\mathbf{n}}^n = Y_{\mathbf{n}}$, where the superscript $n = |\mathbf{n}|$ is added to indicate its degree. 
The statement of the theorem is equivalent to 
$$
   \partial_i Y_\mathbf{n} = \sum_{\mb \in \Lambda_1} a_\mb Y_\mb^{n-1} \quad \hbox{and} \quad 
    \proj_{n+1,\SS}^d (x_i Y_\mathbf{n}) = \sum_{\mb \in \Lambda_2} b_\mb Y_\mb^{n+1},
$$
where $a_m$ and $b_m$ are real constants, $\Lambda_1$ and $\Lambda_2$ consist of at most $2^{d-2}$ elements
in $\NN_0^d$ with $m_1 = 0$ or $1$. This equivalent statement can be established by induction on the dimension $d$,
using the equalities in Proposition \ref{prop_partial} and Proposition \ref{prop_proj}. The case $d =2$ can be explicitly 
written down, which is given in the Appendix. Assume that the statement has been proved in the case of $d-1$ variables
for $d \ge 3$. Then $\partial_i Y_\mathbf{n'}^{n'}(x')$ can be written as a linear combination of at most $2^{d-3}$ many 
$Y_{\mb'}^{n'-1}$ and $\proj_{n'+1,\SS}^d (x_i Y_\mathbf{n'})$ can be written as a linear combination of at most $2^{d-3}$ 
many $Y_{\mb'}^{n'+1}$. By \eqref{eq:Yn-recur}, it is easy to see that $Y_{\mb'}^{n'-1}(x') F_{n_d}^{\l_d-1}(x) = Y_{m_1,\ldots,m_{d-1},n_d}^{n-1}$ and $Y_{\mb'}^{n'+1}(x') F_{n_d}^{\l_d+1}(x)  =  Y_{m_1,\ldots,m_{d-1},n_d}^{n+1}$. 
As a consequence, we can use the identities \eqref{partial_i_Y} and \eqref{proj_i_Y} to complete the proof.  
\qed

\subsection{Orthogonal polynomials on the unit ball} 
A family of mutually orthogonal polynomials with respect to the inner product $\la \cdot, \cdot \ra_\mu$, defined in
\eqref{eq:ordinary-ip}, can be given in terms of the Jacobi polynomials and spherical harmonics. 

For $\a, \b > -1$, Jacobi polynomials $P_n^{(\a,\b)}$ are defined by 
\begin{equation*} 
P_n^{(\a,\b)}(t) = \frac{(\a+1)_n}{n!} {}_2F_1 \left(  \begin{matrix} -n, n+\a+\b+1 \\ \a+1 \end{matrix};  \frac{1-x}{2} \right),
\end{equation*}
where $(a)_n= a(a+1) \ldots (a+n-1)$ denotes the Pochhammer symbol. They are orthogonal with respect to the 
Jacobi weight function $w_{\a,\b}(t) := (1-t)^\alpha(1+t)^{\beta}$ on $[-1,1]$.

For $n \in \NN_0$ and $0 \le j \le n/2$, let $\{Y_\nu^{n-2j}: 1\le \nu\le a_{n-2j}^d\}$ denote
an orthonormal basis for $\mathcal{H}_{n-2j}^d$. For $\mu > -1$, define \cite[p.142]{DX}
\begin{equation}\label{baseP}
P_{j,\nu}^{n,\mu}(x):= P_{j}^{(\mu, n-2j + \frac{d-2}{2})}(2\,\|x\|^2 -1)\, Y_\nu^{n-2j}(x). 
\end{equation}
Then the set $\{P_{j,\nu}^{n,\mu}: 0 \le j \le n/2, \,1 \le \nu \le a_{n-2j}^d \}$ consists of a mutually
orthogonal basis of $\CV_n^d(\varpi_\mu)$; more precisely, 
$$
\la P_{j,\nu}^{n,\mu}, P_{k,\eta}^{m,\mu}\ra_\mu =  h_{j,n}^{\mu}  \delta_{n,m}\,\delta_{j,k}\,\delta_{\nu,\eta},
$$
where $h_{j,n}^{\mu}$ is given by 
\begin{equation} \label{eq:Hjn-mu}
 h_{j,n}^{\mu}: = \frac{(\mu +1)_j (\frac{d}{2})_{n-j} (n-j+\mu+ \frac{d}{2})}
    { j! (\mu+\frac{d+2}{2})_{n-j} (n+\mu+ \frac{d}{2})}.
\end{equation} 
The orthogonal basis $\{P_{j,\nu}^{n,\mu}\}$ satisfies two other orthogonal relations in the Sobolev space. 

\begin{lem} 
Let $\mu >-1$. Then the basis $\{P_{j,\nu}^{n,\mu}\}$ satisfies 
\begin{equation} \label{eq:nabla-P}
  b_{\mu} \int_{\BB^d} \nabla P_{j,\nu}^{n,\mu} (x)\cdot \nabla P_{j',\nu'}^{m,\mu} (x) \varpi_{\mu+1}(x) dx = h_{j,n}^\mu(\nabla)
    \delta_{\nu,\nu'} \delta_{j,j'} \delta_{n,m},
\end{equation}
where 
$$
   h_{j,n}^\mu(\nabla) = (4j(n-j+\mu+d/2)+2(n-2j)(\mu+1)) h_{j,n}^\mu.
$$
\end{lem}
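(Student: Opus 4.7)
The plan is to reduce the Dirichlet-type bilinear form on the left-hand side to the ordinary $\la\cdot,\cdot\ra_\mu$-inner product through integration by parts, and then exhibit $P_{j,\nu}^{n,\mu}$ as an eigenfunction of the resulting second-order operator with eigenvalue exactly $h_{j,n}^\mu(\nabla)/h_{j,n}^\mu$.

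First, since $\varpi_{\mu+1}(x)=(1-\|x\|^2)^{\mu+1}$ vanishes on $\partial\BB^d$ (as $\mu+1>0$), integration by parts yields
\begin{equation*}
b_\mu\int_{\BB^d}\nabla f\cdot\nabla g\,\varpi_{\mu+1}\,dx = -b_\mu\int_{\BB^d} f\,\nabla\!\cdot\!\bigl(\varpi_{\mu+1}\nabla g\bigr)\,dx.
\end{equation*}
Using $\nabla\varpi_{\mu+1}=-2(\mu+1)x\,\varpi_\mu$, I would rewrite the right-hand side as $\la f, Lg\ra_\mu$, where
$$
Lg := -(1-\|x\|^2)\Delta g + 2(\mu+1)\,x\cdot\nabla g.
$$

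Second, I would show that $LP_{j,\nu}^{n,\mu}=\Lambda_{j,n}\,P_{j,\nu}^{n,\mu}$ with $\Lambda_{j,n}=4j(n-j+\mu+d/2)+2(n-2j)(\mu+1)$. Writing $P_{j,\nu}^{n,\mu}(x)=q_j(\|x\|^2)Y_\nu^{n-2j}(x)$ with $q_j(s)=P_j^{(\mu,n-2j+(d-2)/2)}(2s-1)$, the harmonicity of $Y_\nu^{n-2j}$ and Euler's relation $x\cdot\nabla Y_\nu^{n-2j}=(n-2j)Y_\nu^{n-2j}$ give, with $r=\|x\|$ and $k=n-2j$,
\begin{align*}
\Delta P_{j,\nu}^{n,\mu}&=\bigl[4r^2 q_j''(r^2)+(4k+2d)q_j'(r^2)\bigr]Y_\nu^{n-2j},\\
x\cdot\nabla P_{j,\nu}^{n,\mu}&=\bigl[k\,q_j(r^2)+2r^2 q_j'(r^2)\bigr]Y_\nu^{n-2j}.
\end{align*}
Changing variables via $t=2r^2-1$ and invoking the Jacobi differential equation for $P_j^{(\mu,k+(d-2)/2)}(t)$, I expect the first-order term in $q_j'$ to cancel identically and the $q_j$ coefficient to collapse to the constant $4j(j+\mu+k+d/2)+2k(\mu+1)$, which equals $\Lambda_{j,n}$.

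Third, combining these two steps with the orthogonality \eqref{baseP}–\eqref{eq:Hjn-mu} gives
$$
b_\mu\int_{\BB^d}\nabla P_{j,\nu}^{n,\mu}\cdot\nabla P_{j',\nu'}^{m,\mu}\,\varpi_{\mu+1}\,dx
= \la P_{j,\nu}^{n,\mu}, L P_{j',\nu'}^{m,\mu}\ra_\mu
= \Lambda_{j,n}\,h_{j,n}^\mu\,\delta_{n,m}\delta_{j,j'}\delta_{\nu,\nu'},
$$
matching \eqref{eq:nabla-P}. The principal obstacle is the algebraic verification in the second step: one must carefully track the chain-rule factors produced by the substitution $t=2\|x\|^2-1$ and line up the coefficients of $p''(t)$, $tp'(t)$, $p'(t)$, and $p(t)$ with those in the Jacobi ODE with parameters $(\mu,\,n-2j+(d-2)/2)$, so that the cancellation of the $p'$ terms is exact and the remaining $p$ term produces precisely $\Lambda_{j,n}$.
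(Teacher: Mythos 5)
Your proposal is correct and follows essentially the same route as the paper: integrate by parts (the boundary term vanishes since $\mu+1>0$) to reduce the Dirichlet form to $\la \cdot, \bigl[-(1-\|x\|^2)\Delta + 2(\mu+1)\la x,\nabla\ra\bigr]\cdot\ra_\mu$, show via the Jacobi differential equation that $P_{j,\nu}^{n,\mu}$ is an eigenfunction of this operator with eigenvalue $4j(n-j+\mu+d/2)+2(n-2j)(\mu+1)$, and conclude by the ordinary orthogonality. The only cosmetic difference is that you compute the action of the operator in Cartesian form using harmonicity and Euler's relation, whereas the paper works in spherical--polar coordinates; the cancellation of the $p'$ terms you anticipate does check out exactly.
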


\begin{proof}
The orthogonality \eqref{eq:nabla-P} was stated in \cite{PPX}, but the norm $h_{j,n}^\mu(\nabla)$ there is incorrect. 
Since $\varpi_{\mu+1}(x)$ vanishes on the sphere, Green's identity implies
\begin{align*}
\int_{\BB^d}& \nabla P_{j,\nu}^{n,\mu} (x) \cdot \nabla P_{k,\eta}^{m,\mu} (x) 
\varpi_{\mu+1}(x) dx  \\
&= - \int_{\BB^d} P_{k,\eta}^{m,\mu} (x) \left[(1-\|x\|^2) \Delta 
- 2(\mu+1) \la x, \nabla \ra \right] P_{j,\nu}^{n,\mu} (x) \varpi_{\mu}(x) dx.
\end{align*}
Let $\b_j := n-2j + \frac{d-2}{2}$. Working with spherical--polar coordinates, by \eqref{L-B-operator} and
$\la x, \nabla \ra = r \frac{d}{dr}$, a straightforward computation shows that 
\begin{align*}
  \big[(1& -\|x\|^2) \Delta    
- 2(\mu+1) \la x, \nabla \ra \big] P_{j,\nu}^{n,\mu} (x) \\
 = & \left[16 (r^2-r^4) (P_j^{(\mu, \b_j)})''(2r^2 - 1) -8 ((\mu+2+\beta_j)r^2 - \beta_j -1))(P_j^{(\mu, \b_j)})'(2r^2 - 1) \right. \\
      & \left.- 2 (\mu+1)(n-2j)P_j^{(\mu, \b_j)}(2r^2 - 1) \right] r^{n-2j} Y_\nu^{n-2j}(\xi) \\
  = & - \left[4j(j+\mu+\beta_j+1) + 2 (\mu+1)(n-2j)\right] P_{j,\nu}^{n,\mu} (x),
\end{align*}
where the second identity follows from the second order differential equation \cite[(4.2.1)]{Sz} satisfied by the 
Jacobi polynomial $P_j^{(\mu,\beta_j)}$. Consequently, 
\begin{align*}
&b_\mu \int_{\BB^d} \nabla P_{j,\nu}^{n,\mu} (x) \cdot \nabla P_{k,\eta}^{m,\mu} (x) 
\varpi_{\mu+1}(x) dx =\\ 
& \qquad = \lambda_{j,n}^\mu b_\mu \int_{\BB^d} P_{j,\nu}^{n,\mu} (x) P_{k,\eta}^{m,\mu} (x) 
\varpi_{\mu}(x) dx 
= \lambda_{j,n}^\mu h_{j,n}^\mu \delta_{n,m} \delta_{k,j} \delta_{\nu,\eta},
\end{align*}
where $h_{j,n}^\mu$ is defined in \eqref{eq:Hjn-mu},  and
$\lambda_{j,n}^\mu = 4j(j+\mu+\beta_j+1) + 2 (\mu+1)(n-2j)$. 
\end{proof}

\begin{lem} \label{lem:2.2}
Let $\mu >-1$. Then the basis $\{P_{j,\nu}^{n,\mu}\}$ satisfies 
\begin{equation} \label{eq:Dij-P}
  b_\mu \int_{\BB^d} \sum_{1 \le i< j \le d} D_{i,j} P_{\ell,\nu}^{n,\mu} (x) D_{i,j} P_{\ell',\nu'}^{m,\mu} (x) \varpi_{\mu}(x) dx 
    = h_{\ell,n}^\mu(D) \delta_{\nu,\nu'} \delta_{\ell,\ell'} \delta_{n,m},
\end{equation}
where 
$$
   h_{\ell,n}^\mu(D) = (m-2\ell) (m-2 \ell +d-2) h_{\ell,n}^\mu.
$$
\end{lem}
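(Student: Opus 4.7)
The plan is to exploit that each $D_{i,j}$ annihilates every function of $\|x\|^2$, so that only the spherical harmonic factor of $P_{\ell,\nu}^{n,\mu}$ is differentiated:
\begin{equation*}
  D_{i,j} P_{\ell,\nu}^{n,\mu}(x) = P_\ell^{(\mu,\beta_\ell)}(2\|x\|^2-1)\, D_{i,j} Y_\nu^{n-2\ell}(x),\qquad \beta_\ell := n-2\ell+\tfrac{d-2}{2}.
\end{equation*}
Passing to spherical--polar coordinates $x = r\xi$, the homogeneity of $Y_\nu^{n-2\ell}$ combined with $D_{i,j} r = 0$ (since $D_{i,j}$ is the angular derivative in the $(x_i,x_j)$-plane) gives $D_{i,j} Y_\nu^{n-2\ell}(r\xi) = r^{n-2\ell}(D_{i,j} Y_\nu^{n-2\ell})(\xi)$. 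Substituting, the left-hand side of \eqref{eq:Dij-P} factors as a radial integral in $r\in[0,1]$ against the weight $r^{(n-2\ell)+(m-2\ell')+d-1}(1-r^2)^\mu$ times the angular Dirichlet integral
\begin{equation*}
  J := \int_{\sph} \sum_{1\le i<j\le d} D_{i,j} Y_\nu^{n-2\ell}\, D_{i,j} Y_{\nu'}^{m-2\ell'}\, d\sigma.
\end{equation*}

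For $J$, I would combine \eqref{eq:Delta0=} and \eqref{eq:intDelta0} with the eigenvalue relation \eqref{L-B-eigen} and the orthonormality of $\{Y_\nu^k\}$ to obtain
\begin{equation*}
  J = (n-2\ell)(n-2\ell+d-2)\, \sigma_{d-1}\, \delta_{\nu,\nu'}\, \delta_{n-2\ell,\, m-2\ell'},
\end{equation*}
with the sign coming out positive thanks to the skew-adjointness of each $D_{i,j}$ on $\sph$ combined with the negative eigenvalue of $\Delta_0$. Once the constraint $n-2\ell = m-2\ell'$ is forced, the two Jacobi factors in the radial integrand share the parameters $(\mu,\beta_\ell)$, and the substitution $t = 2r^2-1$ reduces the radial integral to the standard Jacobi orthogonality against $w_{\mu,\beta_\ell}$, producing $\delta_{\ell,\ell'}$ and hence $\delta_{n,m}$.

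For the diagonal normalizing constant, I would apply the identical radial--angular factorization to $\la P_{\ell,\nu}^{n,\mu}, P_{\ell,\nu}^{n,\mu}\ra_\mu = h_{\ell,n}^\mu$, which differs from the $D_{i,j}$-integral only by the absence of the angular eigenvalue factor, to conclude $h_{\ell,n}^\mu(D) = (n-2\ell)(n-2\ell+d-2)\, h_{\ell,n}^\mu$. I anticipate no substantive obstacle: the argument is a direct application of the radial--angular factorization under $\varpi_\mu$ combined with the spectral action of $\Delta_0$ on $\CH_{n-2\ell}^d$. The only bookkeeping point is keeping the sign from the skew-adjointness of $D_{i,j}$ consistent with the negative eigenvalue of $\Delta_0$, so that the Dirichlet energy is manifestly nonnegative.
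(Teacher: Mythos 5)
Your proposal is correct and follows essentially the same route as the paper: reduce the $D_{i,j}$-integral to the spherical relation $\int_{\sph}\sum_{i<j}D_{i,j}Y_\nu^{k}\,D_{i,j}Y_{\eta}^{k'}\,d\sigma = k(k+d-2)\int_{\sph}Y_\nu^{k}Y_\eta^{k'}\,d\sigma$ via \eqref{eq:Delta0=}, \eqref{eq:intDelta0} and \eqref{L-B-eigen}, then factor into radial and angular parts in spherical--polar coordinates and invoke Jacobi orthogonality for the radial factor. Your explicit tracking of the sign (skew-adjointness of $D_{i,j}$ against the negative eigenvalue of $\Delta_0$) is in fact slightly more careful than the paper's statement of \eqref{eq:intDelta0}.
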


\begin{proof}
From \eqref{eq:intDelta0} and \eqref{eq:Delta0=}, we obtain immediately that 
\begin{align*}
\int_{\sph} \sum_{i< j} D_{i,j} Y_\nu^{n}(x) D_{i,j} Y_\eta^{m} (x) d\s(x) = m(m+d-2) \int_{\sph} Y_\nu^{n}(x) Y_\eta^{m} (x) d\s(x),  
\end{align*}
form which the stated result follows immediately by writing the integrals in spherical--polar coordinates. 
\end{proof}

For convenience, we define $P_{j,\nu}^{n,\mu} =0$ if $j < 0$. The polynomial $P_{j,\nu}^{n,\mu}$ enjoys a simple
form under both $\Delta$ and $\Delta_0$. 

\begin{lem}\label{lemma_laplace_P}
Let $\mu > -1$ and let $P_{j,\nu}^{n,\mu}$ be defined in \eqref{baseP}. Then 
\begin{equation} \label{laplace_P}
\Delta P_{j,\nu}^{n,\mu}(x) = \k_{n-j}^\mu P_{j-1,\nu}^{n-2,\mu+2}(x) \quad\hbox{and}\quad
   \Delta_0  P_{j,\nu}^{n,\mu}(x) = \l_{n-2j} P_{j,\nu}^{n,\mu}(x),
\end{equation}
where 
$$
\k_{n}^\mu: = 4 (n + \mu + \tfrac{d}{2})(n+\tfrac{d-2}{2})  \quad\hbox{and}\quad \l_n:= -n(n+ d-2).
$$
\end{lem}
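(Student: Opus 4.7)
The plan is to establish the two identities separately: the Laplace--Beltrami formula follows almost immediately from \eqref{L-B-eigen}, whereas the Laplace formula requires Leibniz's rule combined with a Jacobi-polynomial identity. For $\Delta_0$, I would exploit that \eqref{L-B-operator} exhibits $\Delta_0$ as the angular part of $\Delta$, so it commutes with multiplication by any purely radial function. Writing
\begin{equation*}
  P_{j,\nu}^{n,\mu}(x) = P_j^{(\mu,\b_j)}(2r^2-1)\cdot Y_\nu^{n-2j}(x), \qquad \b_j = n-2j+\tfrac{d-2}{2},
\end{equation*}
this reduces me to checking $\Delta_0 Y_\nu^{n-2j}(x) = \l_{n-2j} Y_\nu^{n-2j}(x)$ on $\BB^d$. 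Since $Y_\nu^{n-2j}(x) = r^{n-2j} Y_\nu^{n-2j}(\xi)$ is harmonic and homogeneous of degree $n-2j$, substituting into \eqref{L-B-operator} and using $\Delta Y_\nu^{n-2j} = 0$ together with the $\partial_r$-derivatives of $r^{n-2j}$ reproduces exactly the eigenvalue $\l_{n-2j}$ from \eqref{L-B-eigen}, and the radial factor then passes through unchanged.

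For $\Delta$, I apply Leibniz's rule
\begin{equation*}
\Delta(\phi Y) = (\Delta \phi) Y + \phi (\Delta Y) + 2 \nabla \phi \cdot \nabla Y
\end{equation*}
with $\phi(r) = P_j^{(\mu,\b_j)}(2r^2-1)$ and $Y = Y_\nu^{n-2j}$. The term $\phi \Delta Y$ vanishes by harmonicity. Since $\phi$ is radial, $\nabla \phi = (\phi'(r)/r)\, x$, and Euler's homogeneity gives $x \cdot \nabla Y = (n-2j)Y$. Passing to the variable $u = 2r^2-1$ and using the radial expression $\Delta \phi = \phi''(r) + (d-1)\phi'(r)/r$, a short computation (with $16r^2 = 8(1+u)$ and $2(n-2j)+d = 2\b_j+2$) collapses everything to
\begin{equation*}
  \Delta P_{j,\nu}^{n,\mu}(x) = \bigl[8(1+u) (P_j^{(\mu,\b_j)})''(u) + 8(\b_j+1)(P_j^{(\mu,\b_j)})'(u)\bigr] Y_\nu^{n-2j}(x).
\end{equation*}

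The main obstacle will be identifying the bracketed one-variable expression with $\k_{n-j}^\mu P_{j-1}^{(\mu+2,\b_j)}(u)$. I plan to establish this as a pure Jacobi-polynomial identity: start from the derivative formula $(P_j^{(\mu,\b_j)})'(u) = \tfrac12(j+\mu+\b_j+1) P_{j-1}^{(\mu+1,\b_j+1)}(u)$ and apply a parameter-shift (contiguous) relation to convert $P_{j-1}^{(\mu+1,\b_j+1)}$ into $P_{j-1}^{(\mu+2,\b_j)}$, or alternatively exploit the Jacobi differential equation \cite[(4.2.1)]{Sz} to rewrite $(1+u)P''$ and match coefficients degree by degree. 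The scalar that emerges is $4(j+\b_j)(j+\mu+\b_j+1)$, which equals $\k_{n-j}^\mu$ once one uses $j+\b_j = n-j+(d-2)/2$ and $j+\mu+\b_j+1 = n-j+\mu+d/2$. Finally, since $\b_j = (n-2) - 2(j-1) + (d-2)/2$ and the spherical harmonic index satisfies $n-2j = (n-2)-2(j-1)$, the product $P_{j-1}^{(\mu+2,\b_j)}(u) Y_\nu^{n-2j}(x)$ is precisely $P_{j-1,\nu}^{n-2,\mu+2}(x)$, completing the argument.
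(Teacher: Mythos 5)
Your proposal is correct and follows essentially the same route as the paper: both reduce to the one-variable identity $8(1+u)P''+8(\b_j+1)P'=\k_{n-j}^\mu P_{j-1}^{(\mu+2,\b_j)}$ via the Jacobi derivative formula and the contiguous relation \eqref{eq:Jac-r2}, and both get the $\Delta_0$ statement directly from \eqref{L-B-eigen}. The only cosmetic difference is that you obtain the intermediate radial-times-angular formula from the Cartesian Leibniz rule with harmonicity and Euler's relation, whereas the paper substitutes directly into the spherical-polar form \eqref{L-B-operator}; the computations coincide.
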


\begin{proof}
Let again $\beta_j= n- 2j + \frac{d-2}{2}$. Using \eqref{L-B-operator} in spherical-polar coordinates $x= r \xi$, $\xi \in \sph$,  
it is easy to see that 
\begin{align*}
\Delta P_{j,\nu}^{n,\mu}(x) & = 8 \left(2 r^2 (P_{j}^{(\mu,\beta_j)})''(2r^2 - 1) + 
(\beta_j + 1)(P_{j}^{(\mu,\beta_j)})'(2r^2 - 1)\right)  r^{n-2j} Y_{\nu}^{n-2j}(\xi) \\
& = \left(2 r^2 (P_{j-1}^{(\mu+1,\beta_j+1)})'(2r^2 - 1) + 
(\beta_j + 1)P_{j-1}^{(\mu+1,\beta_j+1)}(2r^2 - 1)\right)\\
& \qquad\qquad\qquad\qquad\qquad \times 4 (j + \beta_j + \mu +  1) r^{n-2j} Y_{\nu}^{n-2j}(\xi) \\
& = 4 (j + \beta_j + \mu +  1) (j + \beta_j)P_{j-1}^{(\mu+2,\beta_j)}(2r^2 - 1) r^{n-2j} Y_{\nu}^{n-2j}(\xi), 
\end{align*}
where the last two equal signs follow, respectively, from the second order differential equation satisfied by the 
Jacobi polynomials and by the identity (\cite[(2.21)]{PPX})
\begin{align}\label{eq:Jac-r2}
\beta P_{j}^{(\alpha,\beta)}(t) + (1+t)\,\frac{d}{d t} P_j^{(\alpha,\beta)}(t) = (\beta + j) P_j^{(\alpha+1,\beta-1)}(t). 
\end{align}
Taking into account that $n-2j = (n-2) - 2(j-1)$, this proves the identity in \eqref{laplace_P} for $j > 0$. For $j=0$, 
$P_{0,\nu}^{n,\mu} = Y_{\nu}^n$, so that $\Delta P_{0,\nu}^{\mu,n}(x) = 0$.

The second identity in \eqref{laplace_P} is a direct consequence of the identity \eqref{L-B-eigen}.
\end{proof}
  
\begin{lem}
For $1 \le i \le d$, let $\wh Y_{\eta, i}^{m+1}: = \proj_{m+1, \SS}^d (x_i Y_\eta^m)$. Let $\beta_\ell = m-2\ell + \frac{d-2}{2}$. 
Then,  
\begin{align} \label{eq:diff-Pbasis}
\partial_i P_{\ell,\eta}^{m,\mu}(x) = & \,
  \frac{\b_\ell + \ell}{\b_\ell} P_\ell^{(\mu+1,\b_\ell-1)}(2r^2-1) \partial_i Y_\eta^{m-2\ell}(x) \\
  & + 2(\ell+\mu+\b_\ell+1) P_{\ell-1}^{(\mu+1,\b_\ell+1)}(2r^2-1)  \wh Y_{\eta,i}^{m-2\ell+1}(x). \notag
\end{align}
\end{lem}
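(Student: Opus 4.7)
The plan is to expand $\partial_i P_{\ell,\eta}^{m,\mu}$ directly by the product rule and then convert the offending factor $x_i Y_\eta^{m-2\ell}$ into its harmonic projection plus a multiple of $\partial_i Y_\eta^{m-2\ell}$, so that the identity can be read off after regrouping and applying two standard Jacobi polynomial identities.

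Concretely, write $P_{\ell,\eta}^{m,\mu}(x) = P_\ell^{(\mu,\b_\ell)}(2r^2-1)\, Y_\eta^{m-2\ell}(x)$ with $\b_\ell = m-2\ell+\frac{d-2}{2}$. Since $\partial_i r^2 = 2x_i$, the product rule gives
\begin{equation*}
\partial_i P_{\ell,\eta}^{m,\mu}(x) = 4 x_i (P_\ell^{(\mu,\b_\ell)})'(2r^2-1)\, Y_\eta^{m-2\ell}(x) + P_\ell^{(\mu,\b_\ell)}(2r^2-1)\, \partial_i Y_\eta^{m-2\ell}(x).
\end{equation*}
Now apply \eqref{proj-def} with $n = m-2\ell$, noting $n+(d-2)/2 = \b_\ell$, to obtain
\begin{equation*}
x_i Y_\eta^{m-2\ell}(x) = \wh Y_{\eta,i}^{m-2\ell+1}(x) + \frac{r^2}{2\b_\ell}\, \partial_i Y_\eta^{m-2\ell}(x).
\end{equation*}
Substituting this back and collecting the terms that multiply $\partial_i Y_\eta^{m-2\ell}(x)$ versus $\wh Y_{\eta,i}^{m-2\ell+1}(x)$ yields
\begin{equation*}
\partial_i P_{\ell,\eta}^{m,\mu}(x) = A(r)\, \partial_i Y_\eta^{m-2\ell}(x) + 4 (P_\ell^{(\mu,\b_\ell)})'(2r^2-1)\, \wh Y_{\eta,i}^{m-2\ell+1}(x),
\end{equation*}
where, setting $t = 2r^2-1$,
\begin{equation*}
A(r) = P_\ell^{(\mu,\b_\ell)}(t) + \frac{1+t}{\b_\ell}\,(P_\ell^{(\mu,\b_\ell)})'(t).
\end{equation*}

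Two standard Jacobi identities finish the proof. The derivative formula $\frac{d}{dt} P_\ell^{(\a,\b)}(t) = \frac{\ell+\a+\b+1}{2} P_{\ell-1}^{(\a+1,\b+1)}(t)$ turns the coefficient of $\wh Y_{\eta,i}^{m-2\ell+1}$ into $2(\ell+\mu+\b_\ell+1) P_{\ell-1}^{(\mu+1,\b_\ell+1)}(t)$, matching the second term of the claimed identity. For $A(r)$, dividing \eqref{eq:Jac-r2} (applied with $\a=\mu$, $\b=\b_\ell$, $j=\ell$) by $\b_\ell$ gives exactly
\begin{equation*}
A(r) = \frac{\b_\ell + \ell}{\b_\ell}\, P_\ell^{(\mu+1,\b_\ell-1)}(t),
\end{equation*}
producing the first term of the claim. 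No step is really hard; the only subtlety is recognizing that the right grouping — namely isolating the spherical-harmonic projection of $x_i Y_\eta^{m-2\ell}$ — is what reveals the two Jacobi identities needed, so that the combined coefficient $P_\ell^{(\mu,\b_\ell)}+\frac{1+t}{\b_\ell}(P_\ell^{(\mu,\b_\ell)})'$ collapses cleanly via \eqref{eq:Jac-r2}.
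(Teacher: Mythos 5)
Your proof is correct and follows essentially the same route as the paper: expand by the product rule, use \eqref{proj-def} to trade $x_i Y_\eta^{m-2\ell}$ for $\wh Y_{\eta,i}^{m-2\ell+1}$ plus a multiple of $\partial_i Y_\eta^{m-2\ell}$, and then apply the Jacobi derivative formula together with \eqref{eq:Jac-r2} to identify the two coefficients. You merely spell out the final verification that the paper leaves to the reader; all steps check.
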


\begin{proof}
A straightforward computation shows that
\begin{align*}
 \partial_i P_{\ell,\eta}^{m,\mu}(x) = &  4 x_i (P_\ell^{(\mu,\b_\ell)})'(2 r^2 -1) Y_\eta^{m-2\ell}(x)  
 + P_\ell^{(\mu,\b_\ell)}(2 r^2 -1) \partial_i Y_\eta^{m-2\ell}(x). 
\end{align*}
Furthermore, by \eqref{proj-def}, 
$$
   \wh Y_{\eta,i}^{m-2\ell+1} (x) = \proj_{m-2\ell+1, \SS}^d (x_i Y_\eta^{m-2\ell}(x)) 
        = x_i Y_\eta^{m-2\ell}(x) - \frac{r^2}{2\beta_\ell} \partial_i Y_\eta^{m-2\ell}(x).
$$
Combining these two identities, we obtain
\begin{align*}
 \partial_i P_{\ell,\eta}^{m,\mu}(x) & = 4 (P_\ell^{(\mu,\b_\ell)})'(2 r^2 -1)  \wh Y_{\eta,i}^{m-2\ell+1}(x) \\
     & + \frac{1}{\b_\ell} \left[2r^2 (P_\ell^{(\mu,\b_\ell)})'(2 r^2 -1) + \b_\ell P_\ell^{(\mu,\b_\ell)}(2 r^2 -1)\right] \partial_i Y_\eta^{m-2\ell}(x). 
\end{align*}
Using the derivative formula of the Jacobi polynomials and \eqref{eq:Jac-r2} we can then verify the stated identity.
\end{proof}

Up to this point, we assumed that the spherical harmonics $Y_\nu^{n-2j}$ in the basis $P_{j,\nu}^{n,\mu}$, defined in \eqref{baseP}, form an orthonormal basis of $\CH_{n-2j}^d$ but did not specify this basis. In our next proposition, however,
we need to specify this basis as the one defined in \eqref{spher-basis}. An orthonormal basis of $\CH_{n-2j}^d$ can be 
derived from the basis in \eqref{spher-basis} up to a normalization. For convenience, we again denote this orthonormal 
basis by $\{Y_\nu^{n-2j}\}$, with the understanding that $\nu \in \NN_0^d$ with $\nu_1 =0$ or $1$ and $|\nu| = n-2j$ now. 

\begin{prop} \label{prop:ballOP}
Let $P_{\ell,\nu}^{n,\mu}$ be defined in \eqref{baseP} with $Y_\nu^{n-2j}$ being the orthonormal basis defined 
in \eqref{spher-basis}. Let $\eta \in \NN_0^d$ with $|\eta| = n-2k$ and $\eta_1 =0$ or $1$. Then
\begin{enumerate}
\item for $1 \le i \le d$, $\la  \partial_i P_{\ell,\nu}^{n,\mu}, P_{k,\eta}^{n-1,\mu+1} \ra_{\mu+1} \ne 0$ only if 
$k = \ell$ or $\ell -1$ and, in each case, for at most $2^{d-1}$ many $\nu \in \NN_0^d$ with $\nu_1 = 0$ or $1$;
\item for $1 \le i< j \le d$, $\la  D_{i,j} P_{\ell,\nu}^{n,\mu}, P_{k,\eta}^{n,\mu} \ra_{\mu+1} \ne 0$ only if $k =\ell$ 
and for at most $2^{2d-1}$ many $\nu \in \NN_0^d$ with $\nu_1 = 0$ or $1$.
\end{enumerate}
\end{prop}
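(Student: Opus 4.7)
The plan is to read off both claims from the explicit decomposition \eqref{eq:diff-Pbasis} (for (1)) and from an analogous identity for $D_{i,j}P_{\ell,\nu}^{n,\mu}$ (for (2)), reducing the remaining combinatorics to the support bounds of Theorem~\ref{thm:diffY}.

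For part (1), \eqref{eq:diff-Pbasis} writes $\partial_i P_{\ell,\nu}^{n,\mu}$ as the sum of two products of a Jacobi polynomial in $2\|x\|^2-1$ and a spherical harmonic. A quick check of the Jacobi parameters shows that the first factor $P_\ell^{(\mu+1,\b_\ell-1)}$ has index $\b_\ell-1=(n-1)-2\ell+(d-2)/2$ matching that of $P_{\ell,\cdot}^{n-1,\mu+1}$, and the second factor $P_{\ell-1}^{(\mu+1,\b_\ell+1)}$ has index $\b_\ell+1=(n-1)-2(\ell-1)+(d-2)/2$ matching that of $P_{\ell-1,\cdot}^{n-1,\mu+1}$. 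Hence both summands lie in $\CV_{n-1}^d(\varpi_{\mu+1})$; passing to spherical--polar coordinates, the inner product $\la\partial_i P_{\ell,\nu}^{n,\mu},P_{k,\eta}^{n-1,\mu+1}\ra_{\mu+1}$ factors as a nonzero radial Jacobi integral times either $\la\partial_i Y_\nu^{n-2\ell},Y_\eta^{n-2\ell-1}\ra_\sph$ (when $k=\ell$) or $\la\wh Y_{\nu,i}^{n-2\ell+1},Y_\eta^{n-2\ell+1}\ra_\sph$ (when $k=\ell-1$), and vanishes otherwise, giving the stated restriction on $k$.

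Within each case, Theorem~\ref{thm:diffY} caps the number of nonzero coefficients of $\partial_i Y_\nu^{n-2\ell}$ and of $\wh Y_{\nu,i}^{n-2\ell+1}$ in the orthonormal basis \eqref{spher-basis} by $2^{d-2}$ for each fixed $\nu$. To convert this into a bound in $\nu$ with $\eta$ fixed, one observes that $\partial_i:\CH_n^d\to\CH_{n-1}^d$ and $\proj_{n,\SS}^d(x_i\,\cdot):\CH_{n-1}^d\to\CH_n^d$ are mutually adjoint up to a scalar on spherical harmonics --- a short computation from \eqref{proj-def} restricted to $\|x\|=1$ using the orthogonality of $\CH_n^d$ and $\CH_{n+2}^d$ on $\sph$. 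The transposed matrix thus has the same sparsity pattern, and the claimed $2^{d-1}$ follows as a crude uniform bound covering both values of $k$.

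For part (2), the key observation is that $D_{i,j}\|x\|^2=0$, so $D_{i,j}$ commutes with any function of $\|x\|$; combined with $[\Delta,D_{i,j}]=0$ and degree preservation, this gives $D_{i,j}:\CH_m^d\to\CH_m^d$ for all $m$, whence
\begin{equation*}
   D_{i,j}P_{\ell,\nu}^{n,\mu}(x)=P_\ell^{(\mu,\b_\ell)}(2\|x\|^2-1)\,D_{i,j}Y_\nu^{n-2\ell}(x).
\end{equation*}
In spherical--polar coordinates the inner product $\la D_{i,j}P_{\ell,\nu}^{n,\mu},P_{k,\eta}^{n,\mu}\ra_{\mu+1}$ then separates into a radial Jacobi integral and a spherical integral of two harmonics whose degrees must coincide, forcing $k=\ell$. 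For the count, expand $D_{i,j}=x_i\partial_j-x_j\partial_i$ and chain the two parts of Theorem~\ref{thm:diffY}: $\partial_j Y_\nu^{n-2\ell}$ is a sum of at most $2^{d-2}$ basis harmonics of degree $n-2\ell-1$, and each of those contributes at most $2^{d-2}$ further basis harmonics of degree $n-2\ell$ after being multiplied by $x_i$ and projected onto $\CH_{n-2\ell}^d$, giving at most $2\cdot 2^{d-2}\cdot 2^{d-2}=2^{2d-3}$ nonzero terms, well within the claimed $2^{2d-1}$. The main obstacle is really just careful bookkeeping: matching Jacobi parameters exactly across the previous lemmas, and correctly composing the sparsity bounds of Theorem~\ref{thm:diffY} under $\partial_i$ and $\proj_{\cdot,\SS}^d(x_i\,\cdot)$, including the adjoint step that turns the ``fixed $\nu$'' bound into the ``fixed $\eta$'' bound required by the proposition.
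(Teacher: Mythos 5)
Your proof is correct and follows essentially the same route as the paper: part (1) is read off from \eqref{eq:diff-Pbasis} together with the sparsity bounds of Theorem~\ref{thm:diffY}, and part (2) from the fact that $D_{i,j}$ acts only on the spherical factor combined with $D_{i,j}=x_i\partial_j-x_j\partial_i$ and the same theorem. Your extra adjointness observation (relating the expansion of $\partial_i Y_\nu$ to that of $\proj_{\cdot,\SS}^d(x_i Y_\eta)$ so as to pass between the fixed-$\nu$ and fixed-$\eta$ counts) is a legitimate refinement of a point the paper passes over silently, and your sharper count $2^{2d-3}$ in part (2) is consistent with the stated bound $2^{2d-1}$.
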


\begin{proof}
Since $\partial_i Y_\nu^{n-2\ell} \in  \CH_{n-2\ell -1}^d$ and $\wh Y_{\nu,i}^{n-2\ell+1} \in \CH_{n-2\ell +1}^d$, 
it is easy to see, by \eqref{eq:diff-Pbasis}, that $P_{\ell,\nu}^{n,\mu}$ can be written as a sum of two polynomials,
both in $\CV_{n-1}^d(\varpi_{\mu+1})$, with one written as a sum of $P_{\ell,\tau}^{n-1,\mu+1}$ over $\tau$ 
and another written as a sum of $P_{\ell-1,\tau}^{n-1,\mu+1}$ over $\tau$; moreover, by Theorem \ref{thm:diffY},
both sums consist of at most $2^{d-2}$ terms, which proves the item (1). 

It is known that $D_{i,j}$ is an angular derivative that applies only on the spherical part and maps $\CH_n^d$ 
into itself \cite[Lemma 1.8.3]{DaiX}. Thus, for $P_{\ell,\nu}^{m,\mu}$, it acts only on $Y_{\nu}^{m-2\ell}$. Since 
$\proj_{m,\SS}^d$ is a linear operator, it follows that 
$$
D_{i,j} Y_\nu^{m-2\ell} = \proj_{m,\SS}^d (D_{i,j} Y_\nu^{m-2\ell}) 
    =  \proj_{m,\SS}^d (x_i \partial_j Y_\nu^{m-2\ell}) -  \proj_{m,\SS}^d (x_j \partial_i Y_\nu^{m-2\ell}).
$$
Consequently, the item (2) follows directly from Theorem \ref{thm:diffY}. 
\end{proof}

Using the explicit formulas of the spherical harmonics $Y_\mb$ and recursive relations in Propositions \ref{prop_partial}
and \ref{prop_proj}, we can use \eqref{eq:diff-Pbasis} to derive explicit formulas for writing 
$\partial_i P_{\ell,\eta}^{n,\mu}$ as a sum of $P_{j,\nu}^{n-1,\mu+1}$.

\section{Fourier orthogonal expansions and approximation}
\setcounter{equation}{0}
 
With respect to the basis \eqref{baseP}, the Fourier orthogonal expansion of $f \in L^2(\varpi_\mu , \BB^d )$ is defined by
\begin{equation}\label{Fourier-coef}
f(x) = \sum_{n=0}^{\infty} \sum_{j=0}^{\lfloor \frac{n}{2} \rfloor} \sum_{\nu}
     \widehat{f}_{j,\nu}^{n,\mu} P_{j,\nu}^{n,\mu}(x), \quad\hbox{with}\quad 
         \widehat{f}_{j,\nu}^{n,\mu} := \frac{1}{h_{j,n}^{\mu}}\la f, P_{j,\nu}^{n,\mu} \ra_\mu. 
\end{equation}
Let $\proj_n^\mu: L^2(\varpi_\mu, \BB^d) \mapsto \CV_n^d(\varpi_\mu)$ and $S_n^\mu:  L^2(\varpi_\mu, \BB^d) 
\mapsto \Pi_n^d$ denote the projection operator and the $n$-th partial sum operator, respectively. Then
\begin{equation} \label{proj-oper}
S_n^\mu f (x) = \sum_{m=0}^n \proj_m^\mu f (x) \quad\hbox{and} \quad \proj_m^\mu f (x) = \sum_{j=0}^{\lfloor\frac{m}{2}\rfloor}
\sum_{\nu} \widehat{f}_{j,\nu}^{m,\mu} P_{j,\nu}^{m,\mu}(x),
\end{equation}
By definition, $S_n^\mu f = f$ if $f \in \Pi_n^d$ and $\la f-S_n^\mu f, v \ra_\mu = 0$ for all $v \in \Pi_n^d$.

It turns out that the partial derivatives commute with the partial sum operators, a fact that plays an essential
role in our development below. 

\begin{lem} Let $\mu > -1$. Then
\begin{equation} \label{eq:comm1}
\partial_i S_n^\mu f = S_{n-1}^{\mu+1} (\partial_i f), \qquad 1 \le i \le d,
\end{equation}
and 
\begin{equation} \label{eq:comm2}
  D_{i,j} S_n^\mu f = S_n^{\mu} (D_{i,j} f), \qquad 1 \le i <j \le d. 
\end{equation}
\end{lem}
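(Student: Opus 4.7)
The plan is to exploit the characterizing orthogonality of $S_n^\mu$ noted just after \eqref{proj-oper}: $S_n^\mu f$ is the unique element of $\Pi_n^d$ with $\langle f - S_n^\mu f, v\rangle_\mu = 0$ for every $v \in \Pi_n^d$. For each identity, I would show that the left-hand side lies in the correct polynomial space and then satisfies the orthogonality that defines the right-hand side.

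For \eqref{eq:comm1}, observe first that $\partial_i S_n^\mu f \in \Pi_{n-1}^d$, so it suffices to verify
$$
\langle \partial_i(f - S_n^\mu f),\, q\rangle_{\mu+1} = 0, \qquad q \in \Pi_{n-1}^d.
$$
The key tool is integration by parts. Because $\mu > -1$, the weight $\varpi_{\mu+1}$ vanishes on $\partial \BB^d$, so there are no boundary terms, and transferring $\partial_i$ onto $q\,\varpi_{\mu+1}$ produces, after factoring out $\varpi_\mu$, the test polynomial
$$
(1-\|x\|^2)\,\partial_i q(x) - 2(\mu+1)\, x_i\, q(x),
$$
which belongs to $\Pi_n^d$. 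The orthogonality of $f - S_n^\mu f$ to $\Pi_n^d$ with respect to $\langle\cdot,\cdot\rangle_\mu$ then finishes the argument.

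For \eqref{eq:comm2}, the angular derivative $D_{i,j} = x_i \partial_j - x_j \partial_i$ preserves total degree, so $D_{i,j} S_n^\mu f \in \Pi_n^d$, and I must show $\langle D_{i,j}(f - S_n^\mu f),\, q\rangle_\mu = 0$ for every $q \in \Pi_n^d$. The central observation is that $D_{i,j}$ is anti-self-adjoint with respect to $\langle\cdot,\cdot\rangle_\mu$. This follows by writing the integral in spherical-polar coordinates: on each sphere $r\sph$, $D_{i,j}$ is a tangential derivative, and since $\varpi_\mu$ depends only on $r$, the anti-self-adjointness of $D_{i,j}$ on $\sph$ (cf.\ \cite[(1.8.7)]{DaiX}) lifts to $\BB^d$ without picking up any boundary term. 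Since $D_{i,j} q \in \Pi_n^d$, the orthogonality of $f - S_n^\mu f$ to $\Pi_n^d$ again closes the proof.

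I do not expect any serious obstacle. The only point to watch is that the test polynomial produced by integration by parts in \eqref{eq:comm1} genuinely lies in $\Pi_n^d$ and not in $\Pi_{n+1}^d$: this is precisely where the shift $\mu \mapsto \mu+1$ is indispensable, because it makes the boundary terms vanish and causes the factor $(1-\|x\|^2)$ produced by differentiating $\varpi_{\mu+1}$ to be absorbed rather than added. For \eqref{eq:comm2} no shift is needed since $D_{i,j}$ is tangential and therefore introduces no boundary contribution at all.
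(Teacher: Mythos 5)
Your argument is correct, but it is not the route the paper takes. The paper deduces both identities from Proposition \ref{prop:ballOP}: since $\partial_i$ maps $\CV_m^d(\varpi_\mu)$ into $\CV_{m-1}^d(\varpi_{\mu+1})$ and $D_{i,j}$ maps $\CV_m^d(\varpi_\mu)$ into itself, applying these operators termwise to $f-S_n^\mu f=\sum_{m>n}\proj_m^\mu f$ immediately yields the orthogonality that characterizes $S_{n-1}^{\mu+1}(\partial_i f)$ and $S_n^\mu(D_{i,j}f)$. You instead verify that same orthogonality directly: integration by parts against $q\,\varpi_{\mu+1}$ produces the test polynomial $(1-\|x\|^2)\partial_i q-2(\mu+1)x_iq\in\Pi_n^d$ for $q\in\Pi_{n-1}^d$ (the degree count is right, since $\partial_i q\in\Pi_{n-2}^d$), and the anti-self-adjointness of $D_{i,j}$ in spherical--polar coordinates handles the angular case. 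This buys you independence from the explicit spherical-harmonic basis and from all of Section 2 --- your proof would work verbatim for any weight of the form $w(\|x\|^2)$ in the $D_{i,j}$ case --- at the cost of a mild regularity/density point in the integration by parts (you need $f\in W_2^1(\varpi_\mu,\BB^d)$ and the vanishing of $\varpi_{\mu+1}$ on $\partial\BB^d$, which indeed uses $\mu>-1$ exactly as you flag). The paper's version is shorter only because Proposition \ref{prop:ballOP} has already been paid for; your integration-by-parts identity is in fact the same computation that underlies the paper's Lemma establishing \eqref{eq:nabla-P}. One small caution: $D_{i,j}$ is \emph{anti}-self-adjoint, as you correctly say, so a sign appears when you move it across the inner product; this is harmless here since you only need the result to vanish, but do not conflate it with the (sign-free) statement \eqref{eq:intDelta0} quoted in the paper.
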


\begin{proof}
By its definition, $f - S_n^\mu f = \sum_{m=n+1}^{\infty} \proj_m^\mu f$ and 
$\proj_m^\mu f \in \CV_m^d(\varpi_\mu)$. From Proposition~\ref{prop:ballOP},
we can easily deduce $\partial_i \proj_m^\mu f \in \CV_{m-1}^d(\varpi_{\mu+1})$, so that 
$\la \partial_i (f - S_n^\mu f), P \ra_{\mu+1} = 0$ for all $P \in \Pi_{n}^d$. Consequently, 
$S_{n-1}^{\mu+1} (\partial_i f - \partial_i S_n^\mu f) = 0$. Since $S_{n-1}^{\mu+1}$ reproduces polynomials of 
degree at most $n - 1$, $S_{n-1}^{\mu+1} (\partial_i S_n^\mu f) = \partial_i S_n^\mu f$, which implies that
$$
0 = S_{n-1}^{\mu+1} (\partial_i f - \partial_i S_n^\mu f) = 
S_{n-1}^{\mu+1} (\partial_i f) - \partial_i S_n^\mu f.
$$
This proves \eqref{eq:comm1}, which implies the first identity in \eqref{eq:comm2}. Now, $D_{i,j}$ maps 
$\CH_n^d$ to itself, which implies that $D_{i,j} \proj_m^\mu f \in \CV_{m}^d(\varpi_{\mu})$ and, as a result, 
that \eqref{eq:comm2} can be established similarly as \eqref{eq:comm1}.
\end{proof}

The relations in the above lemma pass down to the Fourier coefficients. 

\begin{prop} 
Let $f \in \mathcal{W}_2^{2}(\varpi_\mu , \mathbb{B}^d )$ and let $\widehat{f}_{j,\nu}^{n,\mu}$ be as defined 
in \eqref{Fourier-coef}. Then
\begin{align} \label{Delta-Fourier}
\widehat{\Delta f}_{j,\nu}^{n-2,\mu+2} = \k_{n-j-1}^\mu \widehat{f}_{j+1,\nu}^{n,\mu} \quad \hbox{and}\quad
\widehat{\Delta_0 f}_{j,\nu}^{n,\mu} = \l_{n-2j} \widehat{f}_{j,\nu}^{n,\mu},
\end{align}
where $0 \le j \le (n-2)/2$ in the first identity and $0 \le j \le n/2$ in the second identity. 
\end{prop}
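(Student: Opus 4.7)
The plan is to pass both $\Delta$ and $\Delta_0$ off of $f$ and onto the basis polynomial $P_{j,\nu}^{n,\mu}$, then apply Lemma \ref{lemma_laplace_P}. There are two natural ways to do this---commutation with partial sums, or integration by parts/self-adjointness---and I would use each where it is most convenient.

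For the first identity, I would iterate \eqref{eq:comm1} (first with parameter $\mu$, then with $\mu+1$ applied to $\partial_i f$) to obtain $\partial_i^2 S_N^\mu f = S_{N-2}^{\mu+2}(\partial_i^2 f)$, and sum over $i$ to get $\Delta S_N^\mu f = S_{N-2}^{\mu+2}(\Delta f)$. Expanding the left-hand side by linearity from the expansion of $S_N^\mu f$, together with the first part of Lemma \ref{lemma_laplace_P}, yields
$$\Delta S_N^\mu f = \sum_{m=0}^N \sum_{k,\eta}\widehat{f}_{k,\eta}^{m,\mu}\,\kappa_{m-k}^\mu\, P_{k-1,\eta}^{m-2,\mu+2}.$$
Matching these coefficients against the Fourier expansion of $S_{N-2}^{\mu+2}(\Delta f)$ in the basis $\{P_{j,\nu}^{n,\mu+2}\}$, after the reindexing $n=m-2$, $j=k-1$ (linear independence of the basis makes this legal), produces the first identity.

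For the second identity the cleanest route is self-adjointness. By \eqref{eq:intDelta0}, $\Delta_0$ is self-adjoint on $L^2(\sph,d\sigma)$, and the radial--angular factorization of the measure $\varpi_\mu(x)\,dx$ in polar coordinates shows this lifts to self-adjointness on $L^2(\varpi_\mu,\BB^d)$. Combined with the second part of Lemma \ref{lemma_laplace_P}, this gives
$$h_{j,n}^\mu\,\widehat{\Delta_0 f}_{j,\nu}^{n,\mu} = \langle \Delta_0 f, P_{j,\nu}^{n,\mu}\rangle_\mu = \langle f, \Delta_0 P_{j,\nu}^{n,\mu}\rangle_\mu = \lambda_{n-2j}\, h_{j,n}^\mu\,\widehat{f}_{j,\nu}^{n,\mu},$$
from which the identity is immediate. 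Alternatively one can mimic the first argument, iterating \eqref{eq:comm2} together with $\Delta_0 = \sum_{i<j} D_{i,j}^2$ to get $\Delta_0 S_N^\mu f = S_N^\mu(\Delta_0 f)$, and then match coefficients.

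The main subtlety is \emph{regularity}. The hypothesis $f\in W_2^2(\varpi_\mu,\BB^d)$ immediately supplies $\partial_i f\in L^2(\varpi_{\mu+1})$ and $\partial_i^2 f\in L^2(\varpi_{\mu+2})$, so the iterated use of \eqref{eq:comm1} needed for the first identity is automatic. The trickier point is the $\Delta_0$ case, because \eqref{eq:comm2} keeps the weight fixed at $\mu$ while $D_{i,j} f$ is only a priori controlled in $L^2(\varpi_{\mu+1})$; this is precisely why I prefer the self-adjointness route, which needs only that $\Delta_0 f \in L^2(\varpi_\mu)$ (implicit in the statement in order for $\widehat{\Delta_0 f}_{j,\nu}^{n,\mu}$ to make sense) together with a standard density approximation of $f$ by smooth functions to justify passing $\Delta_0$ onto the polynomial $P_{j,\nu}^{n,\mu}$.
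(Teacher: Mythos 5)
Your proposal is correct and, for the first identity, follows essentially the same route as the paper: iterate \eqref{eq:comm1} to get $\Delta S_n^\mu f = S_{n-2}^{\mu+2}(\Delta f)$, expand via Lemma \ref{lemma_laplace_P}, and match coefficients. For the second identity the paper instead invokes the commutation $\Delta_0 S_n^\mu f = S_n^\mu(\Delta_0 f)$ from \eqref{eq:comm2} and \eqref{eq:Delta0=} before applying the eigenvalue relation, but your self-adjointness argument (via the radial--angular factorization of $\varpi_\mu\,dx$) reaches the same conclusion and, as you note, sidesteps the regularity bookkeeping for $D_{i,j}f$; both are fine.
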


\begin{proof}
From $\proj_n^\mu f = S_n^\mu f - S_{n-1}^\mu f$ and \eqref{eq:comm1}, we obtain
$\Delta \proj_n^\mu f = \proj_{n-2}^{\mu+2} \Delta f$. By \eqref{proj-oper} and \eqref{laplace_P},
\begin{align*}
\Delta \proj_n^\mu f(x)  = & \sum_{j=1}^{\lfloor\frac{n}{2}\rfloor} \sum_{\nu}  \widehat{f}_{j,\nu}^{n,\mu} 
           \k_{n-j}^\mu P_{j-1,\nu}^{n-2,\mu+2} (x) \\
       = &\sum_{j=0}^{\lfloor\frac{n-2}{2}\rfloor} \sum_{\nu}  \widehat{f}_{j+1,\nu}^{n,\mu}
           \k_{n-j-1}^\mu P_{j,\nu}^{n-2,\mu+2} (x). 
\end{align*}
Hence, by $\Delta \proj_n^\mu f = \proj_{n-2}^{\mu+2} \Delta f$, the first identity of \eqref{Delta-Fourier} follows. 
The second identity of \eqref{Delta-Fourier} follows similarly, using \eqref{laplace_P} and $\Delta_0 S_n^\mu f =
S_n^\mu \Delta_0 f$, the latter follows from \eqref{eq:comm2} and \eqref{eq:Delta0=}.
\end{proof}
 
We are now ready to prove Theorem \ref{thm:1.1}.  

\bigskip
\noindent
{\it Proof of Theorem \ref{thm:1.1}}.
We start with Parseval's identity, 
$$
E_n(f)_{\mu}^2 = \|f - S_n^\mu f \|_{\mu}^2 = \sum_{m=n+1}^{\infty}\sum_{j=0}^{\lfloor\frac{m}{2}\rfloor}
\sum_{\nu}  \left|\widehat{f}_{j,\nu}^{m,\mu}\right|^2 h_{j,m}^{\mu} = \Sigma_1 + \Sigma_2,
$$ 
where we split the sum as 
\begin{align*}
\Sigma_1 = \sum_{m=n+1}^{\infty}\sum_{j=\lfloor\frac{m}{4}\rfloor}^{\lfloor\frac{m}{2}\rfloor}
\sum_{\nu} \left|\widehat{f}_{j,\nu}^{m,\mu}\right|^2 h_{j,m}^{\mu}
     \quad\hbox{and}\quad
\Sigma_2 = \sum_{m=n+1}^{\infty}\sum_{j=0}^{\lfloor\frac{m}{4}\rfloor-1}
\sum_{\nu} \left|\widehat{f}_{j,\nu}^{m,\mu}\right|^2 h_{j,m}^{\mu}.  
\end{align*}

We estimate $\Sigma_1$ first. Iterating the first identity in \eqref{Delta-Fourier}, we obtain
\begin{equation}\label{eq:F-coeff1}
\left| \widehat{f}_{j,\nu}^{\mu,m}\right|^2 = \prod_{i=1}^s \left(\k_{m-j-i+1}^{\mu+2i}\right)^{-2} 
                      \left| \widehat{\Delta^s f}_{j-s,\nu}^{m-2s,\mu+2s}\right|^2 
    \sim m^{-4 s}  \left| \widehat{\Delta^s f}_{j-s,\nu}^{m-2s,\mu+2s}\right|^2 
\end{equation}
for $\lfloor\frac{m}{4}\rfloor \le j \le \lfloor\frac{m}{2}\rfloor$. Furthermore, by \eqref{eq:Hjn-mu}, it is easy to verify that 
\begin{equation}\label{eq:h-ratio}
\frac{h_{j,m}^{\mu}}{h_{j-s,m-2s}^{\mu+2s}} = 
  \frac{(\mu + 1)_{2s} (m-j-s+\f{d}{2})_s (\mu + m-j + \frac{d+2}{2})_s}
  {(\mu + \frac{d+2}{2})_{2s} (j-s+1)_s (j + \mu+1)_s},
\end{equation}
which is bounded by a constant, independent of $m$, when $j \sim m$. Consequently, it follows that
$$
\Sigma_1 \leq c  \sum_{m=n+1}^{\infty}\sum_{j=\lfloor\frac{m}{4}\rfloor}^{\lfloor\frac{m}{2}\rfloor}
\sum_{\nu}  m^{-4 s}  \left| \widehat{\Delta^s f}_{j-s,\nu}^{m-2s,\mu+2s}\right|^2 h_{j-s,m-2s}^{\mu+2s}
  \le \frac{c}{n^{4s}}E_{n-2s}(\Delta^s f)_{\mu+2s}^2.
$$
Next, we estimate $\Sigma_2$. Iterating the second identity in \eqref{Delta-Fourier}, we obtain
\begin{equation}\label{eq:F-coeff2}
\left|\widehat{f}_{j,\nu}^{m,\mu}\right|^2 =  \left( \lambda_{m-2j}\right)^{-2s} \left|\widehat{\Delta_0^s f}_{j,\nu}^{m,\mu}\right|^2 
  \sim m^{-4 s}  \left|\widehat{\Delta_0^s f}_{j,\nu}^{m,\mu}\right|^2 
\end{equation}
for $0 \le j \le \lfloor\frac{m}{4}\rfloor$. Consequently, it follows that 
$$
\Sigma_2 \leq c  \sum_{m=n+1}^{\infty}\sum_{j=0}^{\lfloor\frac{m}{4}\rfloor-1}
\sum_{\nu}  m^{-4 s}  \left|\widehat{\Delta_0^s f}_{j,\nu}^{m,\mu}\right|^2 
  h_{j,m}^{\mu} 
  \le \frac{c}{n^{4s}}E_{n}(\Delta_0^s f)_{\mu}^2.
$$
Putting these two estimates together completes the proof of the theorem.
\qed

\medskip

In the above proof, we do not need to specify the basis of spherical harmonics in the definition of $P_{j,\nu}^{n,\mu}$. 
The proof of Theorem \ref{thm:1.3} is far more complicated, for which we do need to specify the basis. Thus, in the
rest of this section, we shall choose the basis of $\CH_{n-2j}^d$ as the one in \eqref{spher-basis} and we shall adopt
the convention as in the discussion right before Proposition \ref{prop:ballOP}. 

We need two estimates on the Fourier coefficients. 

\begin{prop} 
Let $f \in \mathcal{W}_2^1(\varpi_\mu , \BB^d)$ and let $\widehat{f}_{j,\nu}^{n,\mu}$ be as defined in \eqref{Fourier-coef}. 
Then
\begin{equation} \label{eq:Fcoeff-est2}
  \left |\wh f_{\ell,\nu}^{m,\mu}\right | \le \frac{c}{m} \sum_{\eta} \sum_{i=1}^d 
     \left( \left | \wh{\partial_i f}_{\ell-1,\eta}^{m-1,\mu+1} \right|+\left | \wh{\partial_i f}_{\ell,\eta}^{m-1,\mu+1} \right|\right),
\end{equation}
where the sum over $\eta$ consists of at most $2^{d-1}$ many terms. 
\end{prop}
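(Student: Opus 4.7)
The strategy is a ball-analogue of the classical one-dimensional integration-by-parts trick: rewrite $\wh f_{\ell,\nu}^{m,\mu} = (h_{\ell,m}^\mu)^{-1}\la f, P_{\ell,\nu}^{m,\mu}\ra_\mu$ using the fact that $P_{\ell,\nu}^{m,\mu}$ is an eigenfunction of the second-order operator $\tilde L := -[(1-\|x\|^2)\Delta - 2(\mu+1)\la x,\nabla\ra]$ (which we already computed during the proof of \eqref{eq:nabla-P}, obtaining eigenvalue $\lambda_{\ell,m}^\mu = 4\ell(\ell+\mu+\b_\ell+1)+2(\mu+1)(m-2\ell)$ with $\b_\ell = m-2\ell+(d-2)/2$), then push the two derivatives onto $f$ by Green's identity. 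Since $\varpi_{\mu+1}$ vanishes on $\sph$, the boundary term vanishes and we obtain the clean identity
$$
\lambda_{\ell,m}^\mu h_{\ell,m}^\mu \wh f_{\ell,\nu}^{m,\mu} \;=\; b_\mu \sum_{i=1}^d \int_{\BB^d} \partial_i f \cdot \partial_i P_{\ell,\nu}^{m,\mu}\,\varpi_{\mu+1}\,dx.
$$

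Next, I would expand $\partial_i P_{\ell,\nu}^{m,\mu}$ using the key identity \eqref{eq:diff-Pbasis}, then decompose the spherical-harmonic factors $\partial_i Y_\nu^{m-2\ell}$ and $\wh Y_{\nu,i}^{m-2\ell+1}$ in the orthonormal bases of $\CH_{m-2\ell-1}^d$ and $\CH_{m-2\ell+1}^d$ respectively. The crucial observation is that the Jacobi parameters $\b_\ell-1$ and $\b_\ell+1$ appearing in \eqref{eq:diff-Pbasis} are precisely the $\b$-parameters of $P_{\ell,\cdot}^{m-1,\mu+1}$ and $P_{\ell-1,\cdot}^{m-1,\mu+1}$, so the expansion takes the form
$$
\partial_i P_{\ell,\nu}^{m,\mu} \;=\; \sum_\tau \alpha_{i,\tau}\, P_{\ell,\tau}^{m-1,\mu+1} + \sum_\tau \b_{i,\tau}\, P_{\ell-1,\tau}^{m-1,\mu+1},
$$
with each sum containing at most $2^{d-2}$ nonzero terms by Theorem \ref{thm:diffY} (giving the asserted bound of $2^{d-1}$ in total). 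Plugging this into the identity above and using the standard biorthogonality $\la \partial_i f, P_{k,\tau}^{m-1,\mu+1}\ra_{\mu+1} = h_{k,m-1}^{\mu+1}\,\wh{\partial_i f}_{k,\tau}^{m-1,\mu+1}$ immediately produces a sum of the form on the right-hand side of \eqref{eq:Fcoeff-est2}.

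The remaining task is a quantitative estimate: show that the prefactors
$$
\frac{|\alpha_{i,\tau}|\, h_{\ell,m-1}^{\mu+1}}{\lambda_{\ell,m}^\mu\, h_{\ell,m}^\mu} \quad \text{and} \quad \frac{|\b_{i,\tau}|\, h_{\ell-1,m-1}^{\mu+1}}{\lambda_{\ell,m}^\mu\, h_{\ell,m}^\mu}
$$
are each bounded by $c/m$. I would bound $|\alpha_{i,\tau}|$ and $|\b_{i,\tau}|$ by $Cm$ using the gradient estimate $\|\partial_i Y_\nu^{m-2\ell}\|_{L^2(\sph)} \lesssim (m-2\ell)$ together with the elementary bounds on $A_1 = (\b_\ell+\ell)/\b_\ell$ and $A_2 = 2(m-\ell+\mu+d/2)$, and the trivial bound $\|\wh Y_{\nu,i}^{m-2\ell+1}\|_{L^2(\sph)}\le \|x_i Y_\nu^{m-2\ell}\|_{L^2(\sph)}\lesssim 1$; then evaluate the $h$-ratios explicitly via \eqref{eq:Hjn-mu} and combine with the lower bound $\lambda_{\ell,m}^\mu \ge c\,m$ (sharper: $\ge c\,\ell(m-\ell)$ when $\ell\ge 1$).

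The main obstacle is the uniformity in $\ell$ of the $c/m$ bound: there are two competing extreme regimes. When $\ell = 0$, the eigenvalue $\lambda_{0,m}^\mu = 2(\mu+1)m$ is only linear, but the second sum collapses (since $P_{-1,\cdot}^{m-1,\mu+1}=0$) and $A_1=1$, so the single surviving term gives exactly $c/m$. When $\ell \sim m/2$, the factor $A_1 \sim m/\b_\ell$ can blow up because $\b_\ell \sim (d-2)/2$ is bounded, but in this regime $\lambda_{\ell,m}^\mu \sim m^2$ and the $h$-ratios are $\sim 1$, so the gain of $1/\lambda$ absorbs the loss from $A_1$. Checking these two extremes together with the intermediate range via the Pochhammer identities is the only technical step; everything else is algebraic rearrangement.
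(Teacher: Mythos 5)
Your proposal is correct and follows the same architecture as the paper's proof: both start from the identity $\lambda_{\ell,m}^\mu h_{\ell,m}^\mu \wh f_{\ell,\nu}^{m,\mu} = b_\mu\sum_i\int_{\BB^d}\partial_i f\,\partial_i P_{\ell,\nu}^{m,\mu}\,\varpi_{\mu+1}\,dx$ (the paper obtains it from \eqref{eq:nabla-P} together with the commutation \eqref{eq:comm1}, you from Green's identity applied to the eigenfunction relation --- these are the same computation), and both invoke Proposition \ref{prop:ballOP} to restrict to $j=\ell,\ell-1$ and at most $2^{d-1}$ indices $\eta$. The one genuine divergence is the final quantitative step. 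The paper bounds the pairing $\la P_{j,\eta}^{m-1,\mu+1},\partial_i P_{\ell,\nu}^{m,\mu}\ra_{\mu+1}$ by Cauchy--Schwarz, using $\|\partial_i P_{\ell,\nu}^{m,\mu}\|_{\mu+1}^2\le\|\nabla P_{\ell,\nu}^{m,\mu}\|_{\mu+1}^2=h_{\ell,m}^\mu(\nabla)\,b_{\mu+1}/b_\mu$, which was already computed; the factor $c/m$ then drops out of a single ratio of norms, uniformly in $\ell$. You instead extract explicit expansion coefficients $\alpha_{i,\tau},\beta_{i,\tau}$ from \eqref{eq:diff-Pbasis} and bound them via $\|\partial_i Y_\nu^{m-2\ell}\|_{L^2(\sph)}\lesssim m-2\ell$ and $\|\wh Y_{\nu,i}^{m-2\ell+1}\|_{L^2(\sph)}\lesssim 1$, then do casework on the regimes of $\ell$. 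Your casework does close (the sharper gradient bound $\lesssim m-2\ell$, not merely $\lesssim m$, is exactly what cancels the $1/\b_\ell$ blow-up near $\ell\sim m/2$, and $\lambda_{\ell,m}^\mu\gtrsim \ell(m-\ell)+(m-2\ell)$ supplies the rest), so your route is valid; it is more explicit and would also yield the exact constants, but it costs the regime analysis that the Cauchy--Schwarz argument avoids entirely.
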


\begin{proof}
In the one hand, by \eqref{eq:nabla-P} and the definition of $\proj_n^\mu f$, we see that 
\begin{align*}
  h_{\ell,m}^\mu(\nabla) \wh f_{\ell,\nu}^{m,\mu}  = b_\mu \int_{\BB^d} \nabla \proj_m^\mu f (x)\cdot \nabla P_{\ell,\nu}^{m,\mu}(x) 
       \varpi_{\mu+1}(x)dx.
\end{align*}
On the other hand, 
\begin{align*}
&  \sum_{i=1}^d b_\mu \int_{\BB^d} \proj_{m-1}^{\mu+1} \partial_i f (x) \partial_i P_{\ell,\nu}^{m,\mu}(x) 
       \varpi_{\mu+1}(x)dx \\
& \qquad\qquad = \frac{b_\mu}{b_{\mu+1}} \, \sum_{j=0}^{\lfloor \frac{m}{2} \rfloor} \sum_\eta \sum_{i=1}^d \wh {\partial_i f}_{j,\eta}^{m-1,\mu+1}
          \la P_{j,\eta}^{m-1,\mu+1}, \partial_i P_{\ell,\nu}^{m,\mu}\ra_{\mu+1}. 
\end{align*}
Hence, taking inner product of both sides of the identity \eqref{eq:comm1} with $\partial_i P_{\ell,\nu}^{m,\mu}$ in the inner
product of $L^2(\varpi_{\mu+1}, \BB^d)$, we obtain 
\begin{equation}\label{eq:fourier-coeff2}
 h_{\ell,m}^\mu(\nabla) \wh f_{\ell,\nu}^{m,\mu} = \frac{b_\mu}{b_{\mu+1}} \,   
 \sum_{j=0}^{\lfloor \frac{m}{2} \rfloor} \sum_\eta \sum_{i=1}^d \wh {\partial_i f}_{j,\eta}^{m-1,\mu+1}
          \la P_{j,\eta}^{m-1,\mu+1}, \partial_i P_{\ell,\nu}^{m,\mu}\ra_{\mu+1}, 
\end{equation}
where the number of terms in the sum over $\eta$ consists of at most $2^{d-1}$ terms by Proposition \ref{prop:ballOP}.
We now estimate the coefficients in the right hand side. 
 
By Proposition \ref{prop:ballOP}, the coefficients $ \la P_{j,\eta}^{m-1,\mu+1}, \partial_i P_{\ell,\nu}^{m,\mu}\ra_{\mu+1}$ 
in \eqref{eq:fourier-coeff2} are nonzero only if $j = \ell$ or $j=\ell-1$. Now, by the Cauchy-Schwarz inequality and 
\eqref{eq:nabla-P},
\begin{align*}
  \left |\frac{b_\mu}{b_{\mu+1}}\,\la P_{j,\eta}^{m-1,\mu+1}, \partial_i P_{\ell,\nu}^{m,\mu}\ra_{\mu+1}\right |^2 
  \le & \frac{b_\mu^2}{b_{\mu+1}^2}
    \left \| P_{j,\eta}^{m-1,\mu+1}\right \|^2_{\mu+1} \left \| \partial_i P_{\ell,\nu}^{m,\mu} \right\|_{\mu+1}^2 \\
    \le &\, \frac{b_\mu}{b_{\mu+1}} h_{j,m-1}^{\mu+1} h_{\ell,m}^\mu(\nabla) \le \frac{c}{m^2}  h_{j,m}^\mu(\nabla)^2 
\end{align*}
where the last step is deduced using the explicit formula of the two norms in \eqref{eq:Hjn-mu} and \eqref{eq:nabla-P}, 
and the fact that $j = \ell$ or $j = \ell-1$. Consequently, since the number of terms in the sum over $\eta$ is independent
of $m$, \eqref{eq:Fcoeff-est2} follows from \eqref{eq:fourier-coeff2}. 
\end{proof}

\begin{prop} 
Let $f \in \mathcal{W}_2^1(\varpi_\mu , \BB^d)$ and let $\widehat{f}_{j,\nu}^{n,\mu}$ be as defined in \eqref{Fourier-coef}. 
Then
\begin{equation} \label{eq:Fcoeff-est1}
  \left |\wh f_{\ell,\nu}^{m,\mu}\right | \le \frac{1}{\sqrt{(m-2\ell)(m-2\ell+d-2)}} \sum_\eta \sum_{i<j}  
      \left | \wh{D_{i,j} f}_{\ell,\eta}^{m,\mu} \right|
\end{equation}
where the sum over $\eta$ consists of finitely many terms independent of $m$. 
\end{prop}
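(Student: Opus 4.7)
The plan mirrors the proof of the previous proposition, but uses the angular orthogonality in Lemma~\ref{lem:2.2} together with the commuting relation \eqref{eq:comm2} in place of \eqref{eq:nabla-P} and \eqref{eq:comm1}. The starting point is to take inner products on both sides of \eqref{eq:comm2} against $D_{i,j} P_{\ell,\nu}^{m,\mu}$ in $L^2(\varpi_\mu,\BB^d)$ and sum over all pairs $i<j$. Since \eqref{eq:comm2} yields $D_{i,j}\proj_m^\mu f = \proj_m^\mu(D_{i,j}f)$ (via $\proj_m^\mu f = S_m^\mu f - S_{m-1}^\mu f$), and since Lemma~\ref{lem:2.2} gives
$$
  b_\mu \sum_{i<j}\int_{\BB^d} D_{i,j}P_{\ell,\nu}^{m,\mu}(x) D_{i,j}P_{\ell,\nu}^{m,\mu}(x)\varpi_\mu(x)\,dx = h_{\ell,m}^\mu(D),
$$
expanding $\proj_m^\mu(D_{i,j}f)$ in the basis $\{P_{k,\eta}^{m,\mu}\}$ and invoking the orthogonality \eqref{eq:Dij-P} on the left hand side produces the identity
$$
  h_{\ell,m}^\mu(D)\,\wh f_{\ell,\nu}^{m,\mu} = \sum_{i<j}\sum_{k,\eta} \wh{D_{i,j}f}_{k,\eta}^{m,\mu}\,\la P_{k,\eta}^{m,\mu}, D_{i,j}P_{\ell,\nu}^{m,\mu}\ra_\mu.
$$

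Next I would invoke Proposition~\ref{prop:ballOP}(2), which guarantees that the inner product $\la P_{k,\eta}^{m,\mu}, D_{i,j}P_{\ell,\nu}^{m,\mu}\ra_\mu$ vanishes unless $k=\ell$, and that for each $(i,j)$ only finitely many (bounded independently of $m$) values of $\eta$ contribute. Applying the Cauchy--Schwarz inequality to the surviving inner products gives
$$
  \bigl|\la P_{\ell,\eta}^{m,\mu}, D_{i,j}P_{\ell,\nu}^{m,\mu}\ra_\mu\bigr|^2 \le \|P_{\ell,\eta}^{m,\mu}\|_\mu^2\,\|D_{i,j}P_{\ell,\nu}^{m,\mu}\|_\mu^2 \le h_{\ell,m}^\mu\,h_{\ell,m}^\mu(D),
$$
where the last step uses \eqref{eq:Dij-P} with $(\ell',\nu',m)=(\ell,\nu,m)$ to bound the single term $\|D_{i,j}P_{\ell,\nu}^{m,\mu}\|_\mu^2$ by the full sum $h_{\ell,m}^\mu(D)$.

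Plugging this in, dividing through by $h_{\ell,m}^\mu(D)$, and using that $h_{\ell,m}^\mu(D) = (m-2\ell)(m-2\ell+d-2)h_{\ell,m}^\mu$ yields
$$
  |\wh f_{\ell,\nu}^{m,\mu}| \le \frac{1}{\sqrt{(m-2\ell)(m-2\ell+d-2)}}\sum_{i<j}\sum_{\eta}\bigl|\wh{D_{i,j}f}_{\ell,\eta}^{m,\mu}\bigr|,
$$
which is \eqref{eq:Fcoeff-est1}. The only subtle point is to ensure the correct accounting of the eigenvalues coming from $h_{\ell,m}^\mu(D)$ and the square root factor produced by Cauchy--Schwarz; the cancellation hinges crucially on the fact that $D_{i,j}$ preserves the index $\ell$ (unlike $\partial_i$, which shifts it), so that only the ``angular eigenvalue'' $(m-2\ell)(m-2\ell+d-2)$ appears, without any competing factor of $m^2$. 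The remaining bookkeeping, namely uniform finiteness of the $\eta$-sum, is supplied directly by Proposition~\ref{prop:ballOP}(2).
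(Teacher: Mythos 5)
Your argument is correct and is essentially the paper's own proof: both derive the identity $h_{\ell,m}^\mu(D)\,\wh f_{\ell,\nu}^{m,\mu}=\sum_\eta\sum_{i<j}\la P_{\ell,\eta}^{m,\mu},D_{i,j}P_{\ell,\nu}^{m,\mu}\ra_\mu\,\wh{D_{i,j}f}_{\ell,\eta}^{m,\mu}$ from the commutation \eqref{eq:comm2} together with Lemma~\ref{lem:2.2}, restrict to $k=\ell$ and finitely many $\eta$ via Proposition~\ref{prop:ballOP}(2), and then apply Cauchy--Schwarz, bounding the single term $\|D_{i,j}P_{\ell,\nu}^{m,\mu}\|_\mu^2$ by the full sum $h_{\ell,m}^\mu(D)$ before dividing. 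No changes needed.
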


\begin{proof}
In the one hand, using \eqref{eq:Dij-P} and the definition of $\proj_n^\mu f$, we obtain
$$
  b_\mu \int_{\BB^d} \sum_{i<j} D_{i,j} \proj_m^\mu f(x) D_{i,j} P_{\ell,\nu}^{m,\mu} 
  \varpi_\mu(x) dx =  (m-2\ell)(m-2\ell+d-2) h_{\ell,m}^\mu \wh f_{\ell,\nu}^{m,\mu}.  
$$
On the other hand, since $D_{i,j}$ maps $\CH_m^d$ to itself, examining the radial part of the orthogonality of
$P_{\ell,\nu}^{m,\mu}$ shows that $\la P_{k,\nu}^{m,\mu}, D_{i,j} P_{\ell,\nu}^{m,\mu}\ra_\mu =0$ if $k\ne \ell$, 
which implies that 
\begin{align*}
b_\mu \int_{\BB^d} \sum_{i<j} & \proj_m^{\mu} (D_{i,j} f; x) D_{i,j} P_{\ell,\nu}^{m,\mu} \varpi_\mu(x) dx  \\
 \qquad  & =  \sum_\eta \sum_{i<j}
 b_\mu \int_{\BB^d}  P_{\ell,\eta}^{m,\mu}(x)  D_{i,j} P_{\ell,\nu}^{m,\mu} (x) \varpi_\mu(x) dx  \, \wh{D_{i,j} f}_{\eta,\nu}^{m,\mu}.
\end{align*}
Since $D_{i,j} \proj_m^\mu f = \proj_m^\mu D_{i,j} f$ by \eqref{eq:comm2}, comparing the above two expressions, we obtain 
\begin{align}  \label{eq:f=Df-coeff}
 (m-2\ell)(m-2\ell+d-2) h_{\ell,m}^\mu \wh f_{\ell,\nu}^{m,\mu}  =  \sum_\eta \sum_{i<j}
\la P_{\ell,\eta}^{m,\mu}, D_{i,j} P_{\ell,\nu}^{m,\mu}\ra_\mu \, \wh{D_{i,j} f}_{\ell,\eta}^{m,\mu}.
\end{align}
By Proposition \ref{prop:ballOP}, the sum over $\eta$ consists of at most $2^{2d-1}$ terms. Now, by the Cauchy-Swartz 
inequality and \eqref{eq:Dij-P},
\begin{align*}
\left | \la P_{\ell,\eta}^{m,\mu}, D_{i,j} P_{\ell,\nu}^{m,\mu}\ra_\mu \right|^2 
& \le h_{\ell,m}^\mu  b_\mu \int_{\BB^d} \left| D_{i,j} P_{\ell,\nu}^{m,\mu} \right|^2 \varpi_\mu(x) dx \\
& \le h_{\ell,m}^\mu  b_\mu \int_{\BB^d} \sum_{i< j} \left| D_{i,j} P_{\ell,\nu}^{m,\mu} \right|^2 \varpi_\mu(x) dx \\
& = (m-2\ell)(m-2 \ell+d-2) (h_{\ell,m}^\mu )^2. 
\end{align*}
Applying this inequality on \eqref{eq:f=Df-coeff} and using the fact the number of terms in the sum over $\eta$ is 
independent of $m$, we complete the proof. 
\end{proof}

\bigskip
\noindent
{\it Proof of Theorem \ref{thm:1.3}}. As in the proof of Theorem \ref{thm:1.1}, we write
$$
E_n(f)_{\mu}^2 = \|f - S_n^\mu f \|_{\mu}^2 = \sum_{m=n+1}^{\infty}\sum_{j=0}^{\lfloor\frac{m}{2}\rfloor}
\sum_{\nu} \left| \widehat{f}_{j,\nu}^{m,\mu}\right|^2 h_{j,m}^{\mu} = \Sigma_1 + \Sigma_2,
$$ 
using the same split up. We work with $\Sigma_1$ first. For $\lfloor\frac{m}{4}\rfloor \le j \le \lfloor\frac{m}{2}\rfloor$, we
obtain by \eqref{eq:F-coeff1} and \eqref{eq:Fcoeff-est2} that 
\begin{align*}
\left|\wh f_{j,\nu}^{m,\mu}\right|^2 \le c m^{-4 s -2} \sum_\eta 
     \sum_{i=1}^d \left( \left |\wh{\partial_i \Delta^s f}_{j-s-1,\eta}^{m-2s-1,\mu+2s+1}\right|^2
       + \left |\wh{\partial_i \Delta^s f}_{j-s,\eta}^{m-2s-1,\mu+2s+1}\right|^2\right).
\end{align*}
Moreover, as in \eqref{eq:h-ratio}, it is not difficult to see, using \eqref{eq:Hjn-mu}, that 
\begin{equation} \label{eq:h-sim-h}
 \frac{h_{j,m}^\mu}{ h_{j-s-1,m-2s-1}^{\mu+2s+1}} \sim 
     \frac{h_{j,m}^\mu}{ h_{j-s,m-2s-1}^{\mu+2s+1}} \sim \frac{(m-j+1)^{2s+1}}{(j+1)^{2s+1}}
\end{equation}
which shows, in particular, that $h_{j,m}^\mu / h_{j-s-1,m-2s-1}^{\mu+2s+1} \le c $
and  $h_{j,m}^\mu / h_{j-s,m-2s-1}^{\mu+2s+1} \le c $ for $\lfloor\frac{m}{4}\rfloor \le j \le \lfloor\frac{m}{2}\rfloor$. 
Furthermore, since the number of terms in the summation over $\eta$ is independent of $m$, we see that
$$
 \sum_\nu \sum_\eta  \left|\wh{\partial_i \Delta^s f}_{j-s,\eta}^{m-2s-1,\mu+2s+1}\right|^2 \le c \sum_{1 \le \tau \le a_{m-1-2j}^d}
      \left|\wh{\partial_i \Delta^s f}_{j-s,\tau}^{m-2s-1,\mu+2s+1}\right|^2.
$$
Consequently, 
\begin{align*}
\Sigma_1 \leq & \, c  \sum_{m=n+1}^{\infty}\sum_{j=\lfloor\frac{m}{4}\rfloor}^{\lfloor\frac{m}{2}\rfloor}
  \frac{1}{m^{2(2s+1)}} \sum_\nu \sum_\eta \sum_{i=1}^d \left|\wh{\partial_i \Delta^s f}_{j-s,\eta}^{m-2s-1,\mu+2s+1}\right|^2 
       h_{j-s,m-2s-1}^{\mu+2s+1} \\
    \leq & \,  \frac{c}{n^{2(2s+1)}}   \sum_{i=1}^d  \sum_{m=n+1}^{\infty}\sum_{j=\lfloor\frac{m}{4}\rfloor}^{\lfloor\frac{m}{2}\rfloor}
 \sum_\tau \left|\wh{\partial_i \Delta^s f}_{j-s,\tau}^{m-2s-1,\mu+2s+1}\right|^2 
       h_{j-s,m-2s-1}^{\mu+2s+1} \\    
 \le\, & \frac{c}{n^{2(2s+1)}} \sum_{i=1}^d E_{n-2s-1}(\partial_i \Delta^s f)_{\mu+2s+1}^2.   
\end{align*}
Next we estimate $\Sigma_2$. For $0 \le j \le \lfloor\frac{m}{4}\rfloor$, we obtain by \eqref{eq:F-coeff2} and 
\eqref{eq:Fcoeff-est1} that 
\begin{align*}
\left|\wh f_{j,\nu}^{m,\mu}\right|^2 \le c m^{-4 s -2} \sum_\eta \sum_{i<j} |\wh{D_{i,j} \Delta_0^s f}_{j,\eta}^{m,\mu}|^2.
\end{align*}
Hence, it follows that 
\begin{align*}
\Sigma_2 &  \le  c \sum_{i< j} \sum_{m=n+1}^{\infty} \frac{1}{m^{4 s+2}} \sum_{j=0}^{\lfloor\frac{m}{4}\rfloor}
      \sum_\nu \sum_\eta  |\wh{D_{i,j} \Delta_0^s f}_{j,\eta}^{m,\mu}|^2 h_{j,m}^{\mu} \\
       & \le \frac{c}{n^{2(2s+1)}} \sum_{i< j} E_n(D_{i,j} \Delta_0^s f)_{\mu}^2,
\end{align*}
where we have used again that the number of terms in the summation over $\eta$ is independent of $m$.
Putting the two estimates together completes the proof. 
\qed

\medskip

\begin{rem}\label{rem:final}
As we see from \eqref{eq:h-sim-h}, the restriction $j \sim m$ is essential for the two ratios to be bounded by a 
constant. This shows that the estimate of $\Sigma_1$ in the proof does not work for small $j$, which gives a strong indication
that the estimate \eqref{eq:thm1.3}, in particular \eqref{main-first}, is unlikely to hold without the second term. This is
the case especially when $f$ is a spherical function, that is, $f(x) = f_0(x/\|x\|)$, for which we need $j=0$ since 
$\wh f_{j,\nu}^n =0$ if $j \ne 0$. 
\end{rem}

\medskip\noindent
{\bf Acknowledgement}: The authors thank anonymous referees for their helpful comments. 

\medskip

\section*{Appendix: The derivatives of spherical harmonics for $d=2,3$}
\setcounter{equation}{0}

For the spherical harmonics in \eqref{spher-basis}, we can derive explicit expressions of their partial derivatives, written
as a sum of the same basis but one degree lower. We state these explicit expressions for the cases $d=2$ and $d=3$
here. 

For $d =2$,  $\dim \mathcal{H}_n^2 = 2$ for $n \ge 1$. In polar coordinates, $(x_1, x_2) = (r \cos \theta, r \sin \theta)$, 
of $\mathbb{R}^2$, an orthogonal basis of $\mathcal{H}_n^2$ is given by
$$
Y^{(1)}_n (x) = r^n \cos n \theta, \quad Y^{(2)}_n (x) = r^n \sin n \theta,
$$
which agrees with \eqref{eq:harm-d=2}. A simple computation gives the following: 

\begin{proposition}
For $n>0$ we have
\begin{align*}
&\partial_1 Y^{(1)}_n (x) = n Y^{(1)}_{n-1} (x), \!\!\!\!\!\!\!\!\!\!\!\!&\partial_1 Y^{(2)}_n (x) = n Y^{(2)}_{n-1} (x) \\  
&\partial_2 Y^{(1)}_n (x) = -n Y^{(2)}_{n-1} (x), \!\!\!\!\!\!\!\!\!\!\!\!&\partial_2 Y^{(2)}_n (x) = n Y^{(1)}_{n-1} (x)
\end{align*}
\end{proposition}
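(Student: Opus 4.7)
The cleanest route is to package the two basis elements as the real and imaginary parts of a single holomorphic monomial. Specifically, writing $z = x_1 + i x_2 = r e^{i\theta}$, the identity $z^n = r^n(\cos n\theta + i \sin n\theta)$ gives
$$
   Y_n^{(1)}(x) + i\, Y_n^{(2)}(x) = z^n,
$$
so each differential identity in the proposition is merely the real or imaginary part of the corresponding identity for $z^n$.

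The plan is as follows. First I would compute $\partial_1 z = 1$ and $\partial_2 z = i$, so that the chain rule yields $\partial_1 z^n = n z^{n-1}$ and $\partial_2 z^n = in z^{n-1}$. Substituting the expansions
$$
   z^n = Y_n^{(1)} + i\, Y_n^{(2)}, \qquad z^{n-1} = Y_{n-1}^{(1)} + i\, Y_{n-1}^{(2)},
$$
the first relation becomes $\partial_1 Y_n^{(1)} + i\, \partial_1 Y_n^{(2)} = n Y_{n-1}^{(1)} + i n Y_{n-1}^{(2)}$, and equating real and imaginary parts gives the two formulas for $\partial_1$. Similarly, the second relation reads $\partial_2 Y_n^{(1)} + i\, \partial_2 Y_n^{(2)} = i n(Y_{n-1}^{(1)} + i Y_{n-1}^{(2)}) = -n Y_{n-1}^{(2)} + i n Y_{n-1}^{(1)}$, which yields the two formulas for $\partial_2$.

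There is essentially no obstacle here; the computation is routine once one passes to the complex variable $z$. If one prefers to avoid complex variables, the same identities can be obtained by differentiating $r^n \cos n\theta$ and $r^n \sin n\theta$ directly via $\partial_1 = \cos\theta \, \partial_r - \frac{\sin\theta}{r}\partial_\theta$ and $\partial_2 = \sin\theta \, \partial_r + \frac{\cos\theta}{r}\partial_\theta$, and simplifying with the product-to-sum formulas $\cos\theta\cos n\theta + \sin\theta \sin n\theta = \cos(n-1)\theta$, etc. Either way the proof is a few lines.
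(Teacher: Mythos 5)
Your proof is correct, and since the paper itself offers nothing beyond the remark that ``a simple computation gives the following,'' your complex-variable packaging $Y_n^{(1)}+iY_n^{(2)}=z^n$ (or the equivalent direct differentiation in polar coordinates) is precisely the intended elementary verification. Both of your routes check out against the stated signs, so there is nothing to add.
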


\bigskip

For $d=3$, the space $\mathcal{H}_n^3$ of spherical harmonics of degree $n$ has dimension $2n + 1$. 
In spherical polar coordinates of $\mathbb{R}^3$, 
\begin{align*}
\begin{cases} x_1 = r \sin \theta \sin \phi,\\
x_2 = r \sin \theta \cos \phi,\\
x_3 = r \cos \theta,
\end{cases}
\quad 0 \le \theta \le \pi, \, 0 \le \phi < 2 \pi, \, r > 0,
\end{align*}
a mutually orthogonal basis of $\mathcal{H}_n^3$ is given by
\begin{align*}
Y_{k,1}^n(x) &= r^n (\sin \theta )^k C_{n-k}^{k+\frac{1}{2}}(\cos \theta ) \cos k \phi, \quad 0 \le k \le n,\\
Y_{k,2}^n(x) &= r^n (\sin \theta )^k C_{n-k}^{k+\frac{1}{2}}(\cos \theta ) \sin k \phi, \quad 1 \le k \le n.
\end{align*}
They are homogeneous polynomials in $x$ and agree with \eqref{spher-basis} when rewriting as 
\begin{align*}
Y_{k,1}^n(x) &= r^{n-k} C_{n-k}^{k+\frac{1}{2}}\left(\frac{x_3}{r}\right) \rho^k T_k\left(\frac{x_2}{\rho}\right), \quad 0 \le k \le n,\\
Y_{k,2}^n(x) &= r^{n-k} C_{n-k}^{k+\frac{1}{2}}\left(\frac{x_3}{r}\right) \rho^{k-1} x_1 U_{k-1}\left(\frac{x_2}{\rho}\right), \quad 1 \le k \le n,
\end{align*}
with $\rho = \sqrt{x_1^2 + x_2^2}$ and $r = \sqrt{x_1^2 + x_2^2 + x_3^2}$. In the next proposition, we will also define 
$Y_{k,1}^n(x) = Y_{k,2}^n(x) =0$ if $k< 0$ or $k > n$. 
 
\begin{proposition}
For $k = 0, 1, \ldots, n$, 
\begin{align*}
\partial_1 Y_{k,1}^n(x) &= - \frac{(n+k)(n+k-1)}{2(2k-1)} Y_{k-1,2}^{n-1}(x) - \left(k+\frac{1}{2}\right) Y_{k+1,2}^{n-1}(x),\\  
\partial_2 Y_{k,1}^n(x) &= \frac{(n+k)(n+k-1)}{2(2k-1)} Y_{k-1,1}^{n-1}(x) - \left(k+\frac{1}{2}\right) Y_{k+1,1}^{n-1}(x),\\
\partial_3 Y_{k,1}^n(x) &= (n+k) Y_{k,1}^{n-1}(x).
\end{align*}
For $k = 1, 2, \ldots, n$, 
\begin{align*}
\partial_1 Y_{k,2}^n(x) &= \frac{(n+k)(n+k-1)}{2(2k-1)} Y_{k-1,1}^{n-1}(x) + \left(k+\frac{1}{2}\right) Y_{k+1,1}^{n-1}(x),\\  
\partial_2 Y_{k,2}^n(x) &= \frac{(n+k)(n+k-1)}{2(2k-1)} Y_{k-1,2}^{n-1}(x) - \left(k+\frac{1}{2}\right) Y_{k+1,2}^{n-1}(x),\\
\partial_3 Y_{k,2}^n(x) &= (n+k) Y_{k,2}^{n-1}(x).
\end{align*}
\end{proposition}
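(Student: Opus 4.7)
The strategy is to specialize the general recurrences in Propositions \ref{prop_partial} and \ref{prop_proj} to $d=3$, using the $d=2$ formulas (the preceding proposition) to evaluate the lower-dimensional ingredients. First I would rewrite each basis element in the product form of \eqref{eq:Yn-recur}: identify $x' = (x_1,x_2)$, $n_d = n - k$, $\l_d = k + \tfrac12$, and the factor $Y_{\mathbf{n'}}(x')$ with the 2D spherical harmonic $g_{0,k}$ (when $j=1$) or $g_{1,k-1}$ (when $j=2$), so that
\[
Y_{k,1}^n(x) = g_{0,k}(x_1,x_2)\, F_{n-k}^{k+1/2}(x), \qquad Y_{k,2}^n(x) = g_{1,k-1}(x_1,x_2)\, F_{n-k}^{k+1/2}(x).
\]

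The $\partial_3$ formula is immediate: apply \eqref{partial_d_Y}, observe $n_d + 2\l_d - 1 = (n-k) + (2k+1) - 1 = n+k$, and note that $F_{n-k-1}^{k+1/2}(x)$ recombined with the same 2D factor gives precisely $Y_{k,j}^{n-1}(x)$. For $\partial_1$ and $\partial_2$ I would invoke \eqref{partial_i_Y} with $i \in \{1,2\}$. This produces two pieces: one involving $\proj_{k+1,\SS}^{2}(x_i Y_{\mathbf{n'}})\, F_{n-k-2}^{k+3/2}(x)$, which corresponds to $Y_{k+1,j'}^{n-1}$; and one involving $\partial_i Y_{\mathbf{n'}}(x')\, F_{n-k}^{k-1/2}(x)$, which corresponds to $Y_{k-1,j'}^{n-1}$. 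To evaluate the first piece I would apply \eqref{proj-def} in two variables, $\proj_{k+1,\SS}^2(x_i Y_{\mathbf{n'}}) = x_i Y_{\mathbf{n'}} - \tfrac{\|x'\|^2}{2k}\partial_i Y_{\mathbf{n'}}$, and then use the $d=2$ proposition for $\partial_i g_{0,k}$ and $\partial_i g_{1,k-1}$; concretely this shows that $\proj_{k+1,\SS}^2(x_1 g_{0,k})$ is a multiple of $g_{1,k}$, that $\proj_{k+1,\SS}^2(x_2 g_{0,k})$ is a multiple of $g_{0,k+1}$, and similarly for the $g_{1,k-1}$ case. For the second piece the $d=2$ proposition gives $\partial_i Y_{\mathbf{n'}}(x')$ directly as a single 2D harmonic of degree $k-1$.

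Finally I would collect the resulting scalar coefficients. Using $\l_d = k+\tfrac12$ gives $-2\l_d = -(2k+1)$, $(n_d+2\l_d-1)(n_d+2\l_d-2) = (n+k)(n+k-1)$, and $(2\l_d-1)(2\l_d-2) = 2k(2k-1)$, which with the numerical factors from the 2D formulas produces the announced coefficients $(k+\tfrac12)$ and $\tfrac{(n+k)(n+k-1)}{2(2k-1)}$. The signs $\pm$ distributing between the $Y_{k\pm 1, 1}^{n-1}$ and $Y_{k\pm 1, 2}^{n-1}$ slots are dictated by the sign conventions in the $d=2$ proposition (where $\partial_2 Y_n^{(1)} = -n Y_{n-1}^{(2)}$ but $\partial_2 Y_n^{(2)} = +n Y_{n-1}^{(1)}$).

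The main obstacle is bookkeeping at the boundary case $k=0$, where $\l_d = \tfrac12$ makes the denominator $(2\l_d - 1)(2\l_d - 2)$ in \eqref{partial_i_Y} vanish. I would handle this by observing that for $k=0$ one has $Y_{\mathbf{n'}} = g_{0,0} = 1$, so $\partial_i Y_{\mathbf{n'}}(x') = 0$ identically and the whole second term of \eqref{partial_i_Y} disappears before the division; the surviving first term reproduces the formula since the convention $Y_{-1,2}^{n-1} \equiv 0$ kills the would-be singular coefficient. With this caveat the remaining work is a direct, case-by-case substitution, and the claimed identities follow.
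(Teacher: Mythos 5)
Your strategy --- specializing Propositions \ref{prop_partial} and \ref{prop_proj} to $d=3$, with the $d=2$ proposition supplying $\partial_i Y_{\mathbf{n'}}$ and $\proj_{k+1,\SS}^{2}(x_iY_{\mathbf{n'}})$ --- is exactly the route the paper intends (the proposition is stated without proof as an instance of those recursions), and for $k\ge 1$ your computation closes: writing $g_{0,k}=\mathrm{Re}\,(x_2+ix_1)^k$, $g_{1,k-1}=\mathrm{Im}\,(x_2+ix_1)^k$ one finds $\proj_{k+1,\SS}^{2}(x_2g_{0,k})=\tfrac12 g_{0,k+1}$, $\proj_{k+1,\SS}^{2}(x_1g_{0,k})=\tfrac12 g_{1,k}$, $\partial_2 g_{0,k}=k\,g_{0,k-1}$, etc., so the factors $-2\l_d\cdot\tfrac12=-(k+\tfrac12)$ and $\tfrac{(n+k)(n+k-1)}{2k(2k-1)}\cdot k=\tfrac{(n+k)(n+k-1)}{2(2k-1)}$ give precisely the stated coefficients and signs.

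The one place the argument does not close is the boundary case you yourself flagged, $k=0$. You are right that the second term of \eqref{partial_i_Y} vanishes there because $\partial_i g_{0,0}=0$, but the surviving first term does \emph{not} reproduce the stated formula: since $x_i\cdot g_{0,0}=x_i$ is already harmonic, $\proj_{1,\SS}^{2}(x_ig_{0,0})=x_i$ equals $1\cdot g_{1,0}$ (resp.\ $1\cdot g_{0,1}$), not $\tfrac12$ of it as in the $k\ge1$ pattern, so \eqref{partial_i_Y} yields
\begin{equation*}
\partial_1 Y_{0,1}^n=-2\cdot\tfrac12\cdot x_1F_{n-2}^{3/2}=-\,Y_{1,2}^{n-1},
\end{equation*}
which is twice the claimed $-(k+\tfrac12)Y_{1,2}^{n-1}\big|_{k=0}=-\tfrac12Y_{1,2}^{n-1}$. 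A direct check confirms this: $Y_{0,1}^2=x_3^2-\tfrac12(x_1^2+x_2^2)$ has $\partial_1Y_{0,1}^2=-x_1=-Y_{1,2}^{1}$. So either the proposition requires a factor-$2$ correction in the $Y_{1,j}^{n-1}$ coefficient when $k=0$ (the usual half-weight anomaly of the constant mode in the two-dimensional basis), or your assertion that "the surviving first term reproduces the formula" at $k=0$ is the false step. Apart from this boundary case, the proposal is a correct, if tedious, substitution.
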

\end{document}